\long\def\delete#1{}
\definecolor{Blue}{rgb}{0,0,1}
\definecolor{Red}{rgb}{1,0,0}
\definecolor{DarkGreen}{rgb}{0,0.6,0}
\definecolor{DarkYellow}{rgb}{1,1,0.2}
\definecolor{DarkPurple}{rgb}{.6,0,1}
\def\mma{\mathscr{C}}
\def\ma{\mathscr{A}}
\def\mf{\mathscr{F}}
\def\mg{\mathscr{G}}
\def\ml{\mathscr{L}}
\def\mm{\mathscr{M}}
\def\mo{\mathscr{M}}
\def\mr{\mathscr{R}}
\def\ms{\mathscr{S}}
\def\mmu{\mathscr{U}}
\def\bs{\setminus}
\def\ag{AG(n,\ff)}
\def\ff{\mathbb{F}_q}
\def\ge{\geqslant}
\def\le{\leqslant}
\def\b{\brack}
\def\kn{\mm(k,n)}
\def\ro{\romannumeral}
\def\rmo{\rm{(1)}}
\def\rmt{\rm{(2)}}
\def\rmth{\rm{(3)}}
\numberwithin{equation}{section}
\newtheorem{thm}{Theorem}[section]
\newtheorem{lem}[thm]{Lemma}
\begin{document}
	\setcounter{page}{1}
	\renewcommand{\thefootnote}{}
	\newcommand{\remark}{\vspace{2ex}\noindent{\bf Remark.\quad}}
	\renewcommand{\abovewithdelims}[2]{%
		\genfrac{[}{]}{0pt}{}{#1}{#2}}

	%-------------------  First Head  -----------------------------------------
	
	\def\qed{\hfill$\Box$\vspace{11pt}}
	
	\title {\bf  Cross $t$-intersecting families for finite affine spaces}

	\author{Tian Yao\thanks{E-mail: \texttt{yaotian@mail.bnu.edu.cn}}}
%	\author{Benjian Lv\thanks{E-mail: \texttt{bjlv@bnu.edu.cn}}}
	\author{Kaishun Wang\thanks{Corresponding author. E-mail: \texttt{wangks@bnu.edu.cn}}}
	\affil{Laboratory of Mathematics and Complex Systems (Ministry of Education), School of
		Mathematical Sciences, Beijing Normal University, Beijing 100875, China}

	\date{}
	
	\openup 0.5\jot
	\maketitle

	\begin{abstract}
		
		Denote the collection of all $k$-flats in $\ag$ by $\kn$.
		Let $\mf_1\subset\mm(k_1,n)$ and $\mf_2\subset\mm(k_2,n)$ satisfy $\dim(F_1\cap F_2)\ge t$ for any $F_1\in\mf_1$ and $F_2\in\mf_2$. We say they are cross $t$-intersecting families.  Moreover, we say they are trivial if each member of them contains a fixed $t$-flats in $\ag$. In this paper, we show that cross $t$-intersecting families with maximum product of sizes are trivial. We also describe the structure of non-trivial $t$-intersecting families with maximum product of sizes.
		
		\vspace{2mm}
		
		\noindent{\bf Key words}\ \ cross $t$-intersecting families; finite affine spaces.
		
		\
		
		\noindent{\bf AMS classification:} \   05D05
		
		%\footnotetext{ E-mail address: caomengyu@mail.bnu.edu.cN'(M.Cao), bjlv@bnu.edu.cN'(B.Lv), wangks@bnu.edu.cN'(K.Wang)}
		
	\end{abstract}
\section{Introduction}

Intersection problems originate from the famous Erd\H{o}s-Ko-Rado Theorem \cite{EKR}. In recent years, intersection problems for mathematical objects which are relative to vector spaces have been caught lots of attention \cite{VHM1,VHM2,AHM1,AEKR,pepe-ekr-polar-maximal}.

Let $n$ and $k$ be positive integers with $n\ge k$, $V$ an $n$-dimensional vector space over the finite field $\ff$, where $q$ is a prime power, and ${V\b k}_q$ denote the family of all $k$-dimensional subspaces of $V$. We usually replace ``$k$-dimensional subspace'' with ``$k$-subspace'' for short. Define the \emph{Gaussian binomial coefficient} by
$${n\b k}_q:=\prod_{0\le i<k}\dfrac{q^{n-i}-1}{q^{k-i}-1},$$
and set ${n\b0}_q=1$. Note that the size of ${V\b k}_q$ is ${n\b k}_q$. From now on, we will omit the subscript $q$.

Let $t$ be a positive integer. A family $\mf\subset{V\b k}$ is called \emph{$t$-intersecting} if $\dim(F_1\cap F_2)\ge t$ for any $F_1,F_2\in\mf$. A $t$-intersecting family $\mf$ is called \emph{trivial} if there exists a $t$-subspace contained in each element of $\mf$. The Erd\H{o}s-Ko Rado Theorem for vector space \cite{VEKR2,VEKR1,VEKR3} shows that a $t$-intersecting subfamily of ${V\b k}$ with maximum size is trivial when $\dim V>2k$. The structure of non-trivial $t$-intersecting subfamily of ${V\b k}$ with maximum size was determined via the parameter ``$t$-covering number", see \cite{VHM1,VHM2}. For $\mf_1\in{V\b k_1}$ and $\mf_2\subset{V\b k_2}$, we say they are cross $t$-intersecting if $\dim(F_1\cap F_2)$ holds for any $F_1\in\mf_1$ and $F_2\in\mf_2$. Recently, Cao et al \cite{CAO} describe the structure of cross $t$-intersecting families with the first and second larges product of sizes.

Suppose $P$ is a $k$-subspace of $\ff^n$. A coset of  $\ff^n$ relative to $P$ is called a $k$-flat. The \emph{dimension} of a $k$-flat $P+x$ is defined to be the dimension of $P$. A flat $F_1$ is said to be \emph{incident} with a flat $F_2$ if $F_1\subset F_2$ or $F_2\subset F_1$. The point set of $\ff^n$ with all flats and the incidence relation among them is called the $n$-dimensional \emph{affine space}, denoted by $AG(n,\ff)$. Let $F_1\cap F_2$ and $F_1\vee F_2$ denote the intersection of $F_1$ and $F_2$, and the minimum flat containing both $F_1$ and $F_2$, respectively. Observe that the intersection of two flats is either a flat or a empty set.

Denote the set of all $k$-flats in $\ag$ by $\mm(k,n)$. A family $\mf\subset\kn$ is said to be $t$-intersecting if $\dim(F_1\cap F_2)\ge t$ for any $F_1,F_2\in\mf$. A $t$-intersecting family is called \emph{trivial} if every element of $\mf$ contains a fixed $t$-flat. Guo and Xu \cite{AEKR} show that the maximum sized $t$-intersecting families are trivial. The structure of maximum sized non-trivial $t$-intersecting families has also been described \cite{TY,AHM1}. 
There are also some results about ``$0$-intersecting families", see \cite{AHM0,AEKR} for more details.

Let $\mf_1\subset\mm(k_1,n)$ and $\mf_2\subset\mm(k_2,n)$ satisfy that $\dim(F_1\cap F_2)\ge t$ for any $F_1\in\mf_1$ and $F_2\in\mf_2$. We say they are \emph{cross $t$-intersecting}. Moreover, they are called \emph{trivial} if each member of them contains a fixed $t$-flat in $\ag$.

The first main result of this paper is the following.
\begin{thm}\label{trivial}
	Let $n$, $k_1$, $k_2$ and $t$ be positive integers with $k_1,k_2\ge t$ and $n\ge k_1+k_2+3$.	Suppose that $\mf_1\subset\mm(k_1,n)$ and $\mf_2\subset\mm(k_2,n)$ are cross $t$-intersecting families with maximum product of sizes. Then there exists a $t$-flat in $\ag$ contained in each member of $\mf_1$ and $\mf_2$. 
\end{thm}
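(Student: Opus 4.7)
The plan is to first compute the product realised by the trivial construction, then bound $|\mf_1||\mf_2|$ above with the correct equality characterisation. Fix any $t$-flat $T_0\subset\ag$ and set $\mm(k_i,n;T_0):=\{F\in\mm(k_i,n):T_0\subset F\}$; counting $(k_i-t)$-subspaces of the quotient of $\ff^n$ by the direction of $T_0$ gives $|\mm(k_i,n;T_0)|={n-t\b k_i-t}$, so the trivial pair achieves the product $M_0:={n-t\b k_1-t}{n-t\b k_2-t}$, and the theorem reduces to showing $|\mf_1||\mf_2|\le M_0$ with equality only for trivial pairs.

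For the upper bound I would adapt the strategy of Cao et al.~\cite{CAO} for cross $t$-intersecting families of subspaces, combined with the Guo--Xu EKR theorem for affine spaces \cite{AEKR} (any $t$-intersecting $\mf\subset\mm(k,n)$ satisfies $|\mf|\le{n-t\b k-t}$, with equality only for trivial families). A naive double count is too loose: for each $F_2\in\mf_2$ the inclusion
$$\mf_1\subset\bigcup_{T\subset F_2,\ \dim T=t}\mm(k_1,n;T)$$
only yields $|\mf_1|\le{k_2\b t}{n-t\b k_1-t}$, so a sharpening is required. My intended route is an affine shifting operator $\sigma_{T_0,T}:\mm(k,n)\to\mm(k,n)$ that, whenever $T\subset F$ and $T_0\not\subset F$, replaces $F$ by a flat $F'$ with $T_0\subset F'$. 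After checking that $\sigma_{T_0,T}$ preserves cross $t$-intersection and does not decrease $|\mf_1||\mf_2|$, iterating until every member of $\mf_1\cup\mf_2$ contains $T_0$ reduces the problem to the trivial case, so a maximiser must have already contained a common $t$-flat. An alternative route is a weighted double count with weights $a(T)=|\{F_1\in\mf_1:T\subset F_1\}|$ and $b(T)=|\{F_2\in\mf_2:T\subset F_2\}|$, using the identities $\sum_T a(T)=|\mf_1|{k_1\b t}$, $\sum_T b(T)=|\mf_2|{k_2\b t}$ together with Cauchy--Schwarz and the per-flat bounds $a(T),b(T)\le{n-t\b k_i-t}$, then tracking equality.

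The main obstacle is designing a correct shift. Affine flats carry both direction-subspace and base-point data, and the intersection of two flats can be empty as a point set even when their direction subspaces meet in large dimension, which complicates verifying that a proposed shift preserves cross $t$-intersection. The natural candidate---replacing the direction subspace by one containing $T_0$---is underdetermined without a compatible base-point rule, and that rule must interact well with the $t$-flat structure. The hypothesis $n\ge k_1+k_2+3$ should provide the slack needed to build such a shift, by guaranteeing enough room in $\ag$ for generic pairs of $k_i$-flats to remain cross $t$-intersecting after the swap; this gap between $n$ and $k_1+k_2$ is precisely what separates the naive bound ${k_2\b t}{n-t\b k_1-t}$ from the sharp bound $M_0$.
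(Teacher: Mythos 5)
Your proposal correctly identifies the target product $M_0={n-t\b k_1-t}{n-t\b k_2-t}$ and correctly observes that the naive union bound is too weak, but the argument stops exactly where the real work begins: neither of your two proposed routes is actually carried out. The affine shifting operator $\sigma_{T_0,T}$ is never constructed, and you yourself note that it is ``underdetermined without a compatible base-point rule''; this is not a minor technicality, since for affine flats the point-set intersection can be empty even when the direction subspaces intersect in large dimension (Lemma \ref{af-weishu}), and no known compression argument for $\mm(k,n)$ is supplied or cited. Moreover, even granting such a shift, the conclusion you draw — ``a maximiser must have already contained a common $t$-flat'' — does not follow from monotonicity of the product under shifting alone; shifting proves the \emph{value} of the maximum, and extracting that \emph{every} maximiser is trivial requires a separate strictness or stability argument, which is missing. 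The alternative weighted double count is likewise only gestured at: the identity $\sum_T a(T)=|\mf_1|{k_1\b t}$ is itself wrong in the affine setting (a $k_1$-flat contains $q^{k_1-t}{k_1\b t}$ $t$-flats, Lemma \ref{af-jishu}), and Cauchy--Schwarz with the per-flat bounds $a(T),b(T)\le{n-t\b k_i-t}$ does not by itself exploit the cross-intersecting coupling between the two families, so there is no reason it yields the sharp bound $M_0$ rather than something of the order of the naive bound.

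For comparison, the paper's proof avoids compression entirely and runs through the $t$-covering number $\tau_t$: Lemma \ref{af-fangsuo} pushes a family $\mf_S$ up to a larger flat at a controlled multiplicative cost, Lemma \ref{af-yibanshangjie} converts this into the bound $|\mf_1|\le q^{\tau_t(\mf_1)-t}{\tau_t(\mf_1)\b t}{k_2-t+1\b1}^{\tau_t(\mf_2)-t}{n-\tau_t(\mf_2)\b k_1-\tau_t(\mf_2)}$, and the monotonicity Lemma \ref{af-dijian} (using $n\ge k_1+k_2+3$) shows the product is \emph{strictly} below $M_0$ unless $\tau_t(\mf_1)=\tau_t(\mf_2)=t$; finally Lemma \ref{af-cover1} shows the two minimum $t$-covers coincide, which is precisely the uniqueness step your sketch lacks. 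If you want to salvage your approach, you would need either to actually define and verify an affine compression (including the base-point rule and the preservation of cross $t$-intersection), or to replace it with a covering-number argument of the above type.
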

Based on Theorem \ref{trivial}, we get a more general theorem, see Theorem \ref{trivial2}.

For a flat $A$ and a positive integer $a$, write $\mm(a,A)=\{F\in\mm(a,n): F\subset A\}$. Let $M\in\mm(k_2+1,n)$,  $T\in\mm(t,M)$ and $S\in\mm(t+1,n)$. Write
\begin{equation}\label{c1-c4}
	\begin{aligned}
		&\mma_1(M,T;k_1,t)=\{F\in\mm(k_1,n): T\subset F, \dim(F\cap M)\ge t+1\},\\
		&\mma_2(M,T;k_2,t)=\{F\in\mm(k_2,n): T\subset F\}\cup\{F\in\mm(k_2,M): \dim(F\cap T)=t-1\},\\
		&\mma_3(S;k_1)=\{F\in\mm(k_1,n):S\subset F\},\\
		&\mma_4(S;k_2,t)=\{F\in\mm(k_2,n):\dim(F\cap S)\ge t\}.
	\end{aligned}
\end{equation}

Observe that $\mma_1(M,T;k_1,t)$ and $\mma_2(M,T;k_2,t)$ are cross $t$-intersecting families. So are $\mma_3(S;k_1)$ and $\mma_4(S;k_2,t)$. 
Our second main result describe the structure of cross $t$-intersecting families with the second largest product of sizes.
\begin{thm}\label{non-trivial}
	Let $n$, $k_1$, $k_2$ and $t$ be positive integers with $k_1\ge k_2\ge t+1$ and $n\ge k_1+k_2+t+7$.	Suppose that $\mf_1\subset\mm(k_1,n)$ and $\mf_2\subset\mm(k_2,n)$ are non-trivial cross $t$-intersecting families with maximum product of sizes. 
	\begin{itemize}
		\item[\rmo] If $k_2>2t$, then there exist $M\in\mo(k_2+1,n)$ and $T\in\mo(t,M)$ such that
		\begin{itemize}
			\item[\rm{(\ro1)}] $\mf_1=\mma_1(M,T;k_1,t)$, $\mf_2=\mma_2(M,T;k_2,t)$; or
			\item[\rm{(\ro2)}] $k_1=k_2$ and $\mf_1=\mma_2(M,T;k_1,t)$, $\mf_2=\mma_1(M,T;k_2,t)$.
		\end{itemize}
		\item[\rmt] If $k_2\le2t$, then there exists $S\in\mo(t+1,n)$ such that
		\begin{itemize}
			\item[\rm{(\ro1)}] $\mf_1=\mma_3(S;k_1)$, $\mf_2=\mma_4(S;k_2,t)$; or
			\item[\rm{(\ro2)}] $k_1=k_2$ and $\mf_1=\mma_4(S;k_1,t)$, $\mf_2=\mma_3(S;k_2)$.
		\end{itemize}
	\end{itemize}
\end{thm}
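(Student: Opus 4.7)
First I would verify that each candidate pair really is a non-trivial cross $t$-intersecting family. For $(\mma_1(M,T;k_1,t),\mma_2(M,T;k_2,t))$ with $k_2>2t$, the delicate part of the cross intersection is $F_1\in\mma_1$ against $F_2\in\mm(k_2,M)$ with $\dim(F_2\cap T)=t-1$; this follows from the dimension formula inside $M$, using $\dim(F_1\cap M)\ge t+1$. Non-triviality of $\mma_2$ is witnessed by its two ``layers,'' since no single $t$-flat simultaneously lies in every $F\supset T$ and in every hyperplane of $M$ missing $T$. For $(\mma_3(S;k_1),\mma_4(S;k_2,t))$ the intersecting property is immediate, and non-triviality follows from members of $\mma_4$ meeting $S$ in a proper $t$-subflat. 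A Gaussian-binomial count then gives the numerical lower bounds $|\mma_1||\mma_2|$ and $|\mma_3||\mma_4|$ that the extremal non-trivial product must match, and one checks that these two bounds exchange dominance precisely at $k_2=2t$.

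Next I would carry out a structural reduction. Let $(\mf_1,\mf_2)$ be non-trivial and extremal; after replacing each $\mf_i$ by its saturation $\{F\in\mm(k_i,n):\dim(F\cap F')\ge t\text{ for every }F'\in\mf_{3-i}\}$, I may assume both families are maximal. Building on the estimates underlying Theorem \ref{trivial}, I would argue that there is a $t$-flat $T$ contained in almost all members of $\mf_1\cup\mf_2$. The idea is that if every $t$-flat were avoided by a linear fraction of one of the families, one could replace the configuration by a star through a suitably chosen $t$-flat and strictly improve the product. The hypothesis $n\ge k_1+k_2+t+7$ should provide enough room in the Gaussian-binomial error terms to make this comparison quantitatively tight.

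With $T$ fixed, I would write $\mf_i=\mf_i^T\cup\mf_i^{\bar T}$ where $\mf_i^T=\{F\in\mf_i:T\subset F\}$. Non-triviality gives $\mf_1^{\bar T}\cup\mf_2^{\bar T}\ne\emptyset$; after possibly swapping the roles of $\mf_1$ and $\mf_2$ (feasible only when $k_1=k_2$, which accounts for alternative (ii) in each case), I may assume $\mf_2^{\bar T}\ne\emptyset$. Any $F_0\in\mf_2^{\bar T}$ imposes $\dim(F_1\cap F_0)\ge t$ on every $F_1\in\mf_1$. When $k_2>2t$, I would show that every such $F_0$ must lie in a common $(k_2+1)$-flat $M\supset T$ and satisfy $\dim(F_0\cap T)=t-1$; maximality then forces $\mf_2=\mma_2(M,T;k_2,t)$, and saturation gives $\mf_1=\mma_1(M,T;k_1,t)$. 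When $k_2\le 2t$, the constraint is instead tight enough to force a common $(t+1)$-flat $S\supset T$ to lie inside every member of $\mf_1$, giving $\mf_1=\mma_3(S;k_1)$ and $\mf_2=\mma_4(S;k_2,t)$.

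The hardest step will be the structural reduction in the second paragraph. One must quantitatively distinguish configurations whose products lie between the trivial maximum guaranteed by Theorem \ref{trivial} and the non-trivial targets $|\mma_1||\mma_2|$, $|\mma_3||\mma_4|$, and the bound $n\ge k_1+k_2+t+7$ should appear as the smallest dimension at which the relevant Gaussian-binomial error terms are small enough to close this gap. The case split at $k_2=2t$ is the precise dimension where the two natural ways of packing $k_2$-flats around $T$ -- either as hyperplanes of a $(k_2+1)$-flat $M$ through $T$, or as flats of $\ag$ containing a $(t+1)$-flat $S\supset T$ -- trade dominance, matching the appearance of $M$ versus $S$ in the two regimes of the theorem.
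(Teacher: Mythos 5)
There is a genuine gap: the step you yourself flag as hardest---showing that a non-trivial extremal pair must have a $t$-flat $T$ contained in (almost) all members, and then that any $F_0\in\mf_2$ not containing $T$ lies in a common $(k_2+1)$-flat $M\supset T$ with $\dim(F_0\cap T)=t-1$ (resp.\ that a common $(t+1)$-flat $S$ emerges)---is only asserted, with a heuristic ``replace by a star and improve the product'' that has no mechanism behind it. Neither claim follows from maximality and cross $t$-intersection alone; it only follows after one proves that every competing configuration has \emph{strictly} smaller product. The paper does exactly this with the $t$-covering-number machinery: Lemma \ref{af-yibanshangjie} bounds $|\mf_1||\mf_2|$ in terms of $(\tau_t(\mf_1),\tau_t(\mf_2))$, Lemma \ref{af-dijian} gives monotonicity, and Lemma \ref{af-upper-other} kills every case except $(\tau_t(\mf_1),\tau_t(\mf_2))=(t,t+1)$ (up to swapping); then, with $T$ a $t$-cover of $\mf_1$ and $M$ the join of the $(t+1)$-dimensional $t$-covers of $\mf_2$, Lemma \ref{af-upper-tt+1} must still eliminate by explicit counting the intermediate structures your sketch silently skips: $\dim M$ strictly between $t+1$ and $k_2+1$ (Case 2), and, when $\dim M=k_2+1$, the possibility of some $A\in\mf_2$ inside $M$ with $A\cap T=\emptyset$ (Case 1). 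Without quantitative substitutes for these steps (and the bound $n\ge k_1+k_2+t+7$ enters precisely there), your outline does not constitute a proof.

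A second, smaller misstep: you frame the $k_2\le 2t$ regime as one where the intersection constraint ``forces'' a common $(t+1)$-flat $S$ inside every member of $\mf_1$. In fact both candidate structures --- the $(M,T)$-type pair $(\mma_1,\mma_2)$ and the $S$-type pair $(\mma_3,\mma_4)$ --- exist as maximal non-trivial cross $t$-intersecting pairs with $(\tau_t(\mf_1),\tau_t(\mf_2))=(t,t+1)$ in \emph{both} regimes; which one is extremal is decided purely numerically, by the comparison $a_1(n,k_1,k_2,t)$ versus $a_2(n,k_1,k_2,t)$ (Lemma \ref{af-cf1-cf2}), which flips at $k_2=2t$, together with Lemma \ref{af-cf-cf'} to rule out the swapped ordering when $k_1>k_2$ (this is also where your parenthetical ``swapping is feasible only when $k_1=k_2$'' needs an actual size comparison rather than an assertion). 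So your first paragraph's size computations are on the right track, but they must be deployed as the deciding step after the structural classification, not replaced by a structural dichotomy at $k_2=2t$.
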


\section{Preliminaries}

In this section, we give some useful lemmas in preparation for the proof of our main theorems.
\begin{lem}\label{gaosifangsuo}
	Let $m$ and $i$ be positive integers with $i<m$. Then the following hold.
	\begin{itemize}
		%\item[\rm(\ro1)] ${m\b i}=\frac{q^m-1}{q^i-1}{m-1\b i-1}=\frac{q^m-1}{q^{m-i}-1}{m-1\b i}$ and ${m\b i}={m-1\b i-1}+q^i{m-1\b i}$;
		\item[\rmo] $q^{m-i}<\frac{q^m-1}{q^i-1}<q^{m-i+1}$ and $q^{i-m-1}<\frac{q^i-1}{q^m-1}<q^{i-m}$;
		\item[\rmt] $q^{i(m-i)}<{m\b i}<q^{i(m-i+1)}$.
		%\item[\rm{(\ro4)}] $\dfrac{q^m-1}{q^{i}-1}<2q^{m-i}$.
		%\item[\rm{(\ro5)}] Suppose $j$ is an integer with $j\le i-1$, and $i+2\le m$. Then ${m-j\b i-j}\le{m-i+1\b1}^{i-j}$.
	\end{itemize}
\end{lem}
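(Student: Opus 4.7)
The plan is to prove part (1) directly by clearing denominators, and then derive part (2) by applying part (1) factor-by-factor to the product expansion of ${m\b i}$. The lemma is a standard collection of $q$-size estimates, so no genuine obstacle is expected; the only care needed is to keep every inequality strict and to verify the edge case $q=2$, $i=1$.

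First I would establish the first pair of inequalities in (1). The bound $q^{m-i}<(q^m-1)/(q^i-1)$ clears denominators to $q^m-q^{m-i}<q^m-1$, i.e.\ $1<q^{m-i}$, which holds because $m>i$ and $q\ge 2$. For the upper estimate $(q^m-1)/(q^i-1)<q^{m-i+1}$, I would invoke the one-line lower bound $q^i-1\ge q^i-q^{i-1}=q^{i-1}(q-1)\ge q^{i-1}$ (valid since $q\ge 2$ and $i\ge 1$), which gives $(q^m-1)/(q^i-1)<q^m/q^{i-1}=q^{m-i+1}$. The second pair $q^{i-m-1}<(q^i-1)/(q^m-1)<q^{i-m}$ then falls out by taking reciprocals of the first pair and reading the inequalities in reverse.

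For part (2), I would apply (1) with parameters $(m-j,i-j)$ for each $j=0,1,\dots,i-1$; these still satisfy $0<i-j<m-j$ because $i<m$, so
$$q^{m-i}\;<\;\frac{q^{m-j}-1}{q^{i-j}-1}\;<\;q^{m-i+1}$$
for every such $j$. Multiplying these $i$ strict inequalities together and comparing with the definition ${m\b i}=\prod_{j=0}^{i-1}(q^{m-j}-1)/(q^{i-j}-1)$ yields $q^{i(m-i)}<{m\b i}<q^{i(m-i+1)}$. Strictness is preserved through the product because there is at least one factor ($i\ge 1$) and all factors are positive, so the lemma follows.
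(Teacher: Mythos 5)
Your proof is correct: part (1) follows exactly as you argue (clearing denominators for the lower bound, the estimate $q^i-1\ge q^{i-1}$ for the upper bound, and reciprocals for the second pair), and part (2) follows by multiplying the $i$ strict factor-wise bounds in the defining product of ${m\b i}$. The paper states this lemma without proof, treating it as a standard $q$-estimate, and your argument is precisely the standard one, so there is nothing to compare beyond noting agreement.
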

%\begin{lem}\rm{(\cite[Lemma 2.1]{VHM1})}\label{F}
%	Let $a\ge0$ and $n\ge k\ge a+1$ and $q\ge2$. Then
%	$${k\b1}{n-a-1\b k-a-1}<\dfrac{1}{(q-1)q^{n-2k}}{n-a\b k-a}.$$
%\end{lem}

Let $V$ be a $(e+\ell)$-space over $\ff$ and $W$ a fixed $\ell$-subspace of $V$. A subspace $P\in{V\b m}$ is called an $(m,h)$-type subspace of $V$ if $\dim(P\cap W)=h$. Let $N'(m_1,h_1;m,h;e+\ell,e)$ denote the number of $(m,h)$-type subspaces of $V$ containing a fixed subspace of type $(m_1,k_1)$.
\begin{lem}\label{af-QY}\rm{(\cite[Lemma 2.3]{QY})}
	$N'(m_1,h_1;m,h;e+\ell,e)\neq0$ if and only if $0\le h_1\le h\le l$ and $0\le m_1-h_1\le m-h\le e$. Moreover, if $N'(m_1,h_1;m,h;e+\ell,e)\neq0$, then
	$$N'(m_1,h_1;m,h;e+\ell,e)=q^{(\ell-h)(m-h-(m_1-h_1))}{e-(m_1-h_1)\b(m-h)-(m_1-h_1)}{\ell-h_1\b h-h_1}.$$
\end{lem}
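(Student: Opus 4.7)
The plan is a three-phase argument: compute the candidate extremal products, localize a common small flat inside both families, then establish rigidity.

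\textbf{Phase 1 (Candidate sizes and a phase transition at $k_2=2t$).} I would first compute
$P_1 := |\mma_1(M,T;k_1,t)|\cdot|\mma_2(M,T;k_2,t)|$ and
$P_2 := |\mma_3(S;k_1)|\cdot|\mma_4(S;k_2,t)|$
in closed form using Lemma~\ref{af-QY}. Each of the four cardinalities decomposes into a sum (over the possible intersection dimensions of the flat with $M$, $T$, or $S$) of $N'$-type counts that Lemma~\ref{af-QY} expresses as a Gaussian coefficient times a power of $q$. Reading off the leading-order behaviour in $q$ with Lemma~\ref{gaosifangsuo}, I expect a clean phase transition: for $k_2>2t$ the product $P_1$ strictly dominates $P_2$, while for $k_2\le 2t$ the reverse inequality holds. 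Hence the maximum non-trivial product must be at least $\max(P_1,P_2)$, and the theorem asserts equality together with matching structure.

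\textbf{Phase 2 (Localization of a common $t$-flat).} Let $(\mf_1,\mf_2)$ be a maximum non-trivial cross $t$-intersecting pair and fix some $F_1^0\in\mf_1$. Every $F_2\in\mf_2$ contains at least one of the ${k_1\b t}$ $t$-subflats $T_1,\dots,T_r$ of $F_1^0$, so $\mf_2$ is covered by the sets $\mf_2(T_j)=\{F\in\mf_2:T_j\subset F\}$. Using Theorem~\ref{trivial} on the restricted cross $t$-intersecting sub-pairs (and bounding each $|\mf_2(T_j)|$ by Lemma~\ref{gaosifangsuo}), I would force a single $t$-flat $T$ to lie inside almost every member of $\mf_1\cup\mf_2$. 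The ``exceptional'' flats not containing $T$ are then tightly organised: when $k_2>2t$ they collectively sit inside a single $(k_2+1)$-flat $M\supset T$, and when $k_2\le 2t$ they all meet a single $(t+1)$-flat $S\supset T$ in dimension at least $t$.

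\textbf{Phase 3 (Rigidity).} Once $T$ and $M$ (or $S$) are located, I would show $\mf_1,\mf_2$ must fill out the corresponding families in \eqref{c1-c4} exactly. Any missing flat would pull $|\mf_1||\mf_2|$ strictly below $\max(P_1,P_2)$, contradicting maximality, while any additional flat would violate the cross $t$-intersecting property with some element of the opposite family once the $M,T$ (or $S$) scaffolding is fixed. When $k_1=k_2$, the roles of $\mf_1$ and $\mf_2$ are interchangeable, yielding the alternative labelled (ii) in each regime.

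\textbf{Main obstacle.} The hardest step is Phase~2, specifically pinning down a \emph{single} kernel $t$-flat $T$ rather than a kernel ``spread'' across several candidate $t$-flats, and subsequently identifying the outer $(k_2+1)$-flat $M$ or $(t+1)$-flat $S$ that confines the exceptional members. The estimates from Lemmas~\ref{gaosifangsuo} and~\ref{af-QY} must be sharp enough to force the product strictly below $\max(P_1,P_2)$ whenever the kernel is split, and the hypothesis $n\ge k_1+k_2+t+7$ is presumably calibrated exactly to make these estimates go through. Separating the regimes $k_2>2t$ and $k_2\le 2t$, and ruling out hybrid near-extremal configurations that sit between the two constructions, will require regime-specific case analysis.
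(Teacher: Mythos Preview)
You are proving the wrong statement. The statement in question is Lemma~\ref{af-QY}, a pure subspace-counting formula: it gives the number $N'(m_1,h_1;m,h;e+\ell,e)$ of $(m,h)$-type subspaces of an $(e+\ell)$-space containing a fixed $(m_1,h_1)$-type subspace. The paper does not prove this lemma at all; it is quoted verbatim from \cite[Lemma~2.3]{QY}. A proof would proceed by choosing the piece inside $W$ (a subspace of the $\ell$-part containing the given $h_1$-space, contributing ${\ell-h_1\brack h-h_1}$), then the complementary piece modulo $W$ (contributing ${e-(m_1-h_1)\brack (m-h)-(m_1-h_1)}$), and finally counting the ways to glue them, which yields the power of $q$.

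What you have actually written is a proof sketch for Theorem~\ref{non-trivial}: the families $\mma_i$, the products $P_1,P_2$, the threshold $k_2=2t$, and the hypothesis $n\ge k_1+k_2+t+7$ all belong to that theorem, not to Lemma~\ref{af-QY}. Even viewed as an outline for Theorem~\ref{non-trivial}, your Phase~2 differs substantially from the paper's route. The paper does not fix an $F_1^0$ and cover $\mf_2$ by its $t$-subflats; instead it organises the argument entirely through the $t$-covering numbers $\tau_t(\mf_1),\tau_t(\mf_2)$. Lemma~\ref{af-upper-other} disposes of every configuration with $(\tau_t(\mf_1),\tau_t(\mf_2))\neq(t,t+1)$ (up to swapping) via the general upper bound of Lemma~\ref{af-yibanshangjie}, and then Lemma~\ref{af-upper-tt+1} analyses the single remaining case $(t,t+1)$ by looking at the join $M=\bigvee_{S\in\ms}S$ of all minimum $t$-covers, splitting into $\dim M=k_2+1$, $t+2\le\dim M\le k_2$, and $\dim M=t+1$. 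Your proposed covering-by-$t$-subflats argument would need to produce comparably sharp bounds to rule out ``spread'' kernels, and you have not indicated any mechanism for that beyond invoking Lemmas~\ref{gaosifangsuo} and~\ref{af-QY}; the paper's $\tau_t$-based inequalities (Lemmas~\ref{af-fangsuo} and~\ref{af-yibanshangjie}) are precisely what make this step go through.
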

\begin{lem}\label{af-jishu}\rm{(\cite[Theorems 1.18-1.19]{JS})} Let $U$ be an $m$-flat in $\ag$ with $0\le m\le n$.
	\begin{itemize}
		\item[\rmo] If $0\le k\le m$, the number of $k$-flats in $\ag$ contained in $U$ is $q^{m-k}{m\b k}$.
		\item[\rmt] If $m\le k\le n$, the number of $k$-flats in $\ag$ containing $U$ is ${n-m\b k-m}$.
	\end{itemize}
\end{lem}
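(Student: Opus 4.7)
Both parts follow from the same strategy: fix a presentation $U = P + x$ with $P \in {V \b m}$ a linear $m$-subspace of $V = \ff^n$ and $x \in V$, and then translate each affine count into a linear one by recording the direction subspace of the flat under consideration. In this way the two formulas become bookkeeping around the standard Gaussian-coefficient counts of subspaces in a vector space.

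For \rmo, I would argue that every $k$-flat contained in $U$ has the form $Q + y$ with $y \in U$ and $Q$ a $k$-subspace of $V$. Taking differences of two points of $Q + y$ and using $U - U = P$ forces $Q \subset P$, so $Q \in {P \b k}$. Conversely, every $Q \in {P \b k}$ does arise this way: the cosets of $Q$ inside the ambient space partition $V$, and those cosets that lie in $U$ partition $U$ into exactly $|U|/|Q| = q^{m-k}$ disjoint $k$-flats. Summing over the ${m \b k}$ choices of $Q$ then gives $q^{m-k} {m \b k}$.

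For \rmt, I would observe that if $F = R + z$ is a $k$-flat containing $U$, then in particular $x \in F$, so $z - x \in R$ and hence $F = R + x$. The inclusion $U \subset F$ is therefore equivalent to $P \subset R$. Conversely, every $k$-subspace $R$ of $V$ with $P \subset R$ yields a $k$-flat $R + x$ containing $U$, and two such subspaces produce the same flat iff they agree (their direction subspaces match). The count thus reduces to counting $k$-subspaces of $V$ containing the fixed $m$-subspace $P$; passing to the quotient $V/P$ of dimension $n - m$, these are in bijection with the $(k-m)$-subspaces of $V/P$, and that count is ${n - m \b k - m}$ by definition of the Gaussian binomial.

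The only real item to be careful about — more a checklist than a genuine obstacle — is verifying that the parameterizations $(Q, y) \mapsto Q + y$ in \rmo and $R \mapsto R + x$ in \rmt are surjective onto the set of flats one is counting, and that in \rmo the overcounting is exactly by the translation redundancy $y \sim y + Q$, giving the quotient factor $q^{m-k}$. Once those identifications are nailed down the formulas are immediate from the vector-space counts.
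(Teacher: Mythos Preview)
Your argument is correct in both parts: the identification of the direction subspace $Q\subset P$ together with the coset count $|U|/|Q|=q^{m-k}$ handles \rmo, and the reduction to $k$-subspaces $R\supset P$ via passage to $V/P$ handles \rmt. The paper does not supply its own proof of this lemma---it is quoted as \cite[Theorems 1.18--1.19]{JS}---so there is nothing to compare against; your self-contained derivation is the standard one and is exactly what lies behind the cited result.
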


For a flat $U$, let $U'$ denote the subspace of $\ff^n$ relative to $U$.
\begin{lem}\label{af-weishu}\rm{(\cite{aff-add,JS})} Let $F_1=F_1'+f_1$ and $F_2=F_2'+f_2$ be two flats in $\ag$. The following hold.
	\begin{itemize}
		\item[\rmo] $F_1\cap F_2\neq\emptyset$ if and only if $f_1-f_2\in F_1'+F_2'$.
		\item[\rmt] If $F_1\cap F_2\neq\emptyset$, then $F_1\cap F_2=F_1'\cap F_2'+x$, where $x\in F_1\cap F_2$.
		\item[\rmth] $F_1\vee F_2=F_1'+F_2'+\langle f_2-f_1\rangle+f_1$. Moreover,
		\begin{equation*}
			\dim(F_1\vee F_2)=\left\{
			\begin{array}{ll}
				\dim F_1+\dim F_2-\dim(F_1\cap F_2), &\text{if}\ F_1\cap F_2\neq\emptyset,\\
				\dim F_1+\dim F_2-\dim(F_1'\cap F_2')+1, &\text{if}\ F_1\cap F_2=\emptyset.
			\end{array}
			\right.
		\end{equation*}
	\end{itemize}
\end{lem}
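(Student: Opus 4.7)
The plan is to establish the three parts sequentially, since part \rmth will rely on both \rmo and \rmt.

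For \rmo, the natural approach is to unfold the definitions: $F_1 \cap F_2 \neq \emptyset$ means there exist $u_i \in F_i'$ with $u_1 + f_1 = u_2 + f_2$, which rearranges to $f_1 - f_2 = u_2 - u_1$; since $F_1'$ and $F_2'$ are subspaces (closed under sum and negation), this is equivalent to $f_1 - f_2 \in F_1' + F_2'$. This is a short algebraic check and needs no further machinery.

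For \rmt, the plan is to re-anchor. Pick any $x \in F_1 \cap F_2$ and rewrite both flats as $F_i = F_i' + x$ (a flat is determined by its direction subspace together with any of its points). Then an element $u_1 + x = u_2 + x$ with $u_i \in F_i'$ forces $u_1 = u_2 \in F_1' \cap F_2'$, so $F_1 \cap F_2 = (F_1' \cap F_2') + x$, which is the stated formula.

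The substantive step is \rmth. Setting $L := F_1' + F_2' + \langle f_2 - f_1 \rangle$, I would first verify that $L + f_1$ is a flat containing both $F_1$ and $F_2$: containment of $F_1$ is immediate, while $F_2 = F_2' + (f_2 - f_1) + f_1 \subset L + f_1$ because both $F_2'$ and $f_2 - f_1$ lie in $L$. For the minimality, suppose $G = G' + g$ is any flat containing $F_1 \cup F_2$. Choosing the representative $g = f_1 \in F_1 \subset G$ gives $F_1' \subset G'$ directly; then $f_2 \in G = G' + f_1$ forces $f_2 - f_1 \in G'$; and from $F_2' + f_2 \subset G' + f_1$ combined with $f_2 - f_1 \in G'$ one extracts $F_2' \subset G'$. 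Thus $L \subset G'$ and $L + f_1 \subset G$, pinning down $F_1 \vee F_2 = L + f_1$. The dimension formula then splits on whether $F_1 \cap F_2$ is empty: by \rmo, $\langle f_2 - f_1 \rangle$ is absorbed into $F_1' + F_2'$ precisely when $F_1 \cap F_2 \neq \emptyset$, so $\dim L$ equals $\dim(F_1' + F_2')$ or $\dim(F_1' + F_2') + 1$ accordingly; in the nonempty case \rmt further identifies $\dim(F_1' \cap F_2') = \dim(F_1 \cap F_2)$, producing the first branch, while the second branch comes from the corresponding expansion with the extra $+1$.

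The main obstacle, though modest, is keeping the affine-versus-linear bookkeeping straight in \rmth: the closed-form expression $F_1' + F_2' + \langle f_2 - f_1 \rangle + f_1$ is anchored at $f_1$, so I should note briefly that the resulting flat is independent of this choice of anchor and that $\langle f_1 - f_2 \rangle = \langle f_2 - f_1 \rangle$ restores symmetry between $F_1$ and $F_2$. Everything else reduces to standard subspace dimension identities applied to the direction space $L$.
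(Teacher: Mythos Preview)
Your argument is correct in all three parts. The paper itself does not prove this lemma; it is stated with a citation to \cite{aff-add,JS} and used as a black box, so there is no ``paper's proof'' to compare against. Your direct verification---unpacking the coset definitions for \rmo\ and \rmt, and for \rmth\ checking that $L+f_1$ with $L=F_1'+F_2'+\langle f_2-f_1\rangle$ both contains $F_1\cup F_2$ and embeds into any flat that does---is exactly the standard elementary argument one finds in the cited references, and it goes through without issue. The only minor remark is that in the empty-intersection branch you might state explicitly that $f_2-f_1\neq 0$ (which is forced, since $f_1=f_2$ would give a common point), so that $\langle f_2-f_1\rangle$ genuinely contributes one extra dimension; otherwise the bookkeeping is complete.
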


For $A,B\in\ag$ and $\mf\subset\kn$, we say $A$ \emph{$t$-intersects} $B$ if $\dim(A\cap B)\ge t$, and write
$$\quad\mf_A=\{F\in\mf: A\subset F\}.$$
For $S\in\mo(s,n)$, $T\in\mo(t,S)$ and $j\in\{t,t+1,\dots,s\}$, write
\begin{equation*}\label{af-xla}
	\begin{aligned}
		\ml_j(S,T;k)&=\{(I,F)\in\mo(j,n)\times\mo(k,n): T\subset I\subset S, I\subset F\},\\
		\ma_j(S,T;k)&=\{F\in\mo(k,n): T\subset F, \dim(F\cap S)=j\}
	\end{aligned}
\end{equation*}
and
$$a_0(n,k,s,t)={s-t\b1}{n-t-1\b k-t-1}-q{s-t\b2}{n-t-2\b k-t-2}.$$
\begin{lem}\label{af-suanliangci-xiajie}
Let $n$, $k$, $s$ and $t$ be positive integers with $n>k,s$ and $k,s\ge t+1$. Suppose $S\in\mo(s,n)$ and $T\in\mo(t,S)$. Then
$$a_0(n,k,s,t)\le|\{F\in\mo(k,n): T\subset F, \dim(F\cap S)\ge t+1\}|.$$
\end{lem}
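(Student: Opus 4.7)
The plan is a two-term Bonferroni/double-counting argument. Write $X = \{F \in \mo(k,n) : T \subset F,\ \dim(F \cap S) \ge t+1\}$ for the set whose size we want to bound below, and for each $F \in X$ set $a_F = \dim(F \cap S) - t \ge 1$. I would weight every $F \in X$ by ${a_F \b 1} - q\,{a_F \b 2}$, show that this weight is always at most $1$ (so summing over $X$ gives a lower bound on $|X|$), and then evaluate each of the two resulting sums by swapping the order of summation with the lattice of $(t+1)$- and $(t+2)$-flats through $T$ inside $S$.

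The first step is the pointwise $q$-binomial inequality
$${a \b 1} - q\,{a \b 2} \le 1 \quad\text{for every integer } a \ge 1,$$
with equality at $a = 1$ and $a = 2$. Direct substitution gives equality in those two cases, while for $a \ge 3$ one rewrites the left-hand side as $\frac{q^a - 1}{q - 1}\cdot\frac{q^2 + q - 1 - q^a}{q^2 - 1}$, which is non-positive since $q^a \ge q^3 > q^2 + q - 1$. Summing this inequality over $F \in X$ with $a = a_F$ yields
$$|X| \;\ge\; \sum_{F \in X}{a_F \b 1} \;-\; q\sum_{F \in X}{a_F \b 2}.$$

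The second step evaluates the two sums by exchanging the order of summation. By Lemma~\ref{af-jishu}(2) applied inside the flat $F \cap S$, ${a_F \b d}$ is precisely the number of $(t+d)$-flats $I$ with $T \subset I \subset F \cap S$. Hence $\sum_{F \in X}{a_F \b 1}$ counts pairs $(I, F)$ with $T \subset I \subset S$, $\dim I = t+1$, and $I \subset F \in \mo(k,n)$; notice that once $I \subset F$ both defining conditions of $X$ are automatic (since $T \subset I \subset F$ and $I \subset F \cap S$ forces $\dim(F\cap S) \ge t+1$), so no $F \in X$ is missed. Counting by $I$ first, Lemma~\ref{af-jishu}(2) gives ${s-t \b 1}$ choices for $I$ inside $S$ and ${n-t-1 \b k-t-1}$ choices for $F$ containing $I$. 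The identical swap with $(t+2)$-flats turns the second sum into ${s-t \b 2}{n-t-2 \b k-t-2}$. Combining the evaluations with the inequality of the previous paragraph produces the claimed bound $|X| \ge a_0(n,k,s,t)$.

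I do not anticipate a substantive obstacle: the only mild subtlety is accepting that the weight ${a_F \b 1} - q\,{a_F \b 2}$ drops strictly below $1$, in fact becomes negative, once $a_F \ge 3$. This does not spoil the one-sided bound — the weighting is simply chosen so that the double-counting collapses cleanly onto $(t+1)$- and $(t+2)$-flats through $T$ in $S$, and everything else is routine counting via Lemma~\ref{af-jishu}.
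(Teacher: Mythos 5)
Your proposal is correct and is essentially the paper's own argument in a different packaging: the paper stratifies the target family by $i=\dim(F\cap S)$ and uses the double-counting identity for $\ml_{t+1}$ and $\ml_{t+2}$ to express $a_0(n,k,s,t)$ as $\sum_i\bigl({i-t\b1}-q{i-t\b2}\bigr)|\ma_i(S,T;k)|$, which is exactly your pointwise weight ${a_F\b1}-q{a_F\b2}$ grouped by strata, with the same observation that the weight equals $1$ for $a_F\in\{1,2\}$ and is negative for $a_F\ge3$. No gap; the two write-ups coincide in substance.
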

\begin{proof}
For each $j\in\{t+1,\dots,s\}$, using Lemma \ref{af-jishu},  by double counting $|\ml_j(S,T;k)|$, we have
\begin{equation}\label{af-suanliangci-formu}
	|\ml_j(S,T;k)|={s-t\b j-t}{n-j\b k-j}=\sum_{i=j}^s{i-t\b j-t}|\ma_i(S,T;k)|.
\end{equation}
Then
\begin{equation*}
	\begin{aligned}
		a_0(n,k,s,t)=&\ |\ml_{t+1}(S,T;k)|-q|\ml_{t+2}(S,T;k)|\\
		=&\sum_{i=t+1}^s{i-t\b1}|\ma_i(S,T;k)|-q\sum_{i=t+2}^s{i-t\b2}|\ma_i(S,T;k)|\\
		=&\ |\ma_{t+1}(S,T;k)|+|\ma_{t+2}(S,T;k)|+\sum_{i=3}^s\left({i-t\b1}-q{i-t\b2}\right)|\ma_i(S,T;k)|.
	\end{aligned}
\end{equation*}
Note that ${i-t\b1}<q{i-t\b2}$ for $i\ge t+3$. We have
\begin{equation*}
	\begin{aligned}
		a_0(n,k,s,t)&\le|\ma_{t+1}(S,T;k)|+|\ma_{t+2}(S,T;k)|\\
		&=|\{F\in\mo(k,n):T\subset F, \dim(F\cap S)\in\{t+1,t+2\}\}|\\
		&\le|\{F\in\mo(k,n): T\subset F, \dim(F\cap S)\ge t+1\}|,
	\end{aligned}
\end{equation*}
as desired.
\end{proof}

Let $\mf\subset\kn$. For a flat $T$ in $\ag$, if $\dim(T\cap F)\ge t$ holds for each $F\in\mf$, we say $T$ is a \emph{$t$-cover} of $\mf$. Define
$$\tau_t(\mf):=\min\{\dim T: T\ \text{is a}\ t\text{-cover of}\ \mf\}.$$

\begin{lem}\label{af-fangsuo}
Let $n$, $k$, $s$ and $t$ be positive integers with $n>k\ge s\ge t$. Suppose $\mf\subset\mo(k,n)$, $X$ is a $t$-cover of $\mf$  with dimension $x$ and $S\in\mo(s,n)$. If $X$ does not $t$-intersect $S$, then there exists a flat $R$ in $\ag$ such that $S\subsetneq R$ and
\begin{equation}\label{af-fangsuo-formu}
	|\mf_S|\le{x-t+1\b1}^{\dim R-\dim S}|\mf_R|,
\end{equation}
%where
%\[\dim R=
%\begin{cases}
%	s+t-\dim(X'\cap S'),&X\cap S\neq\emptyset;\\
%	s+t-\dim(X'\cap S')+1,&X\cap S=\emptyset\ \mbox{?}\ \dim(X'\cap S')<t;\\
%	s+1,&X\cap S=\emptyset\ \mbox{?}\ \dim(X'\cap S')\ge t.
%\end{cases}
%\]
Moreover, if $n\ge x+k-t+1$, then
$$|\mf_S|\le{x-t+1\b1}{n-s-1\b k-s-1}.$$
\end{lem}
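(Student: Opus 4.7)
The plan is to first produce a flat $R\supsetneq S$ of dimension $s+1$ achieving the one-step bound via a double-counting argument over points of $X\setminus S$, then iterate to reach any desired $\dim R$, and finally derive the ``moreover'' by combining the one-step bound with the trivial estimate on $|\mf_R|$. The key technical ingredient is a Gaussian binomial inequality that converts the pigeonhole estimate into the clean factor ${x-t+1\b 1}$.

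Concretely, for every $F\in\mf_S$ the inclusion $S\subset F$ forces $X\cap F\cap S=X\cap S$, and since $\dim(X\cap F)\ge t>\dim(X\cap S)=:c$ (with the convention $q^c:=0$ when $X\cap S=\emptyset$), the slice $(X\cap F)\setminus S$ contains at least $q^t-q^c$ points. Counting incidence pairs,
$$\sum_{p\in X\setminus S}|\mf_{S\vee\{p\}}|=\sum_{F\in\mf_S}|(X\cap F)\setminus S|\ge(q^t-q^c)|\mf_S|,$$
while $|X\setminus S|=q^x-q^c$. Pigeonhole yields some $p^*\in X\setminus S$ with
$$|\mf_S|\le\dfrac{q^x-q^c}{q^t-q^c}\,|\mf_{S\vee\{p^*\}}|,$$
and by Lemma \ref{af-weishu} the flat $R:=S\vee\{p^*\}$ has dimension exactly $s+1$. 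The stated bound then follows from the inequality $\dfrac{q^x-q^c}{q^t-q^c}\le{x-t+1\b 1}$ valid for every $c\in\{-\infty,0,1,\ldots,t-1\}$: writing $a=x-c$ and $b=t-c$ with $a\ge b\ge 1$, it reduces to $(q^b-1)(q^{a-b+1}-1)-(q^a-1)(q-1)=q(q^{a-b}-1)(q^{b-1}-1)\ge 0$, and the empty-intersection case collapses to $q^{x-t}\le{x-t+1\b 1}$.

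For a general value of $\dim R-\dim S$, I would iterate the above: if $X$ still fails to $t$-intersect the new flat $R$, apply the same one-step argument with $R$ in place of $S$ to obtain $R'\supsetneq R$ of dimension one larger with $|\mf_R|\le{x-t+1\b 1}|\mf_{R'}|$, and since the factor depends only on $x$ and $t$ the inequalities compose to give the exponent $\dim R-\dim S$ after the corresponding number of steps. For the ``moreover'', I would take $R$ to be the first $(s+1)$-flat and bound $|\mf_R|\le{n-s-1\b k-s-1}$ via Lemma \ref{af-jishu}(2); the hypothesis $n\ge x+k-t+1$ is the one ensuring the ambient dimensions are large enough for the Gaussian coefficients involved to be meaningful. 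The main obstacle I foresee is verifying the Gaussian binomial inequality $\dfrac{q^x-q^c}{q^t-q^c}\le{x-t+1\b 1}$ uniformly in $c$ and carefully handling the $X\cap S=\emptyset$ edge case in the double count; beyond that, the iteration step is a relabeled re-application of the base case.
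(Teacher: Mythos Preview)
Your argument is correct and takes a genuinely different route from the paper's. The paper works structurally: it sets $U=X\vee S$, splits into three cases according to whether $X\cap S$ is empty and whether $u=\dim(S'\cap X')$ is below $t$, and in each case chooses a dimension $r$ (possibly larger than $s+1$), shows $\mf_S\subset\bigcup_{R\in\mr}\mf_R$ where $\mr$ runs over the $r$-flats between $S$ and $U$, and bounds $|\mr|$ by ${x-t+1\b1}^{r-s}$ using ${a\b b}\le{a-b+1\b1}^b$. Your approach replaces all of this with a single point-count and pigeonhole, producing an $R$ of dimension exactly $s+1$; the case split disappears because the inequality $\tfrac{q^x-q^c}{q^t-q^c}\le{x-t+1\b1}$ is uniform in $c$ (including the empty-intersection convention). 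This is more elementary and shorter. What the paper's version buys is a larger jump in dimension per application, but since the lemma only asserts existence of \emph{some} $R\supsetneq S$, and since in the later applications (e.g.\ Lemma~\ref{af-yibanshangjie}) the lemma is iterated anyway, your one-step version is fully adequate.

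Two small remarks. First, the iteration paragraph you sketch is not needed for the lemma as stated: your $(s+1)$-flat already witnesses the existential claim with exponent $\dim R-\dim S=1$. Second, your reading of the hypothesis $n\ge x+k-t+1$ is slightly off: in the paper it is used to show (via inequality~\eqref{add}) that ${x-t+1\b1}^{r-s}{n-r\b k-r}$ is maximised at $r=s+1$, because their $R$ may have $r>s+1$. In your approach you land directly at $r=s+1$, so the ``moreover'' follows immediately from $|\mf_R|\le{n-s-1\b k-s-1}$ without invoking that hypothesis at all.
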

\begin{proof}
	W.l.o.g., assume that $\mf_S\neq\emptyset$.

Let $U=X\vee S$, $u=\dim(S'\cap X')$ and 
$$\mr=\{R\in\mo(r,n): S\subset R\subset U\},$$
where
\[r=
\begin{cases}
	s+t-u,&X\cap S\neq\emptyset;\\
	s+t-u+1,&X\cap S=\emptyset\ \mbox{and}\ u<t;\\
	s+1,&X\cap S=\emptyset\ \mbox{and}\ u\ge t.
\end{cases}
\]
For each $F\in\mf_S$, we have $\dim(X\cap F)\ge t$ and 
$$(S\vee(F\cap X))\subset F\cap U,\quad\dim(S'\cap(F\cap X)')=\dim(S'\cap X')=u.$$

Suppose $X\cap S\neq\emptyset$. We have $S\cap(F\cap X)\neq\emptyset$. By Lemma \ref{af-weishu}, we get
$$\dim(F\cap U)\ge\dim(S\vee(F\cap X))\ge s+t-\dim(S'\cap(F\cap X)')=s+t-u.$$
Together with $\dim U=x+s-u$, we obtain
$$|\mr|={x-u\b t-u}\le{x-t+1\b1}^{t-u}={x-t+1\b1}^{r-s},\quad\mf_S\subset\bigcup_{R\in\mr}\mf_R.$$
Let $R\in\mr$ such that $|\mf_R|$ is the largest. Then \eqref{af-fangsuo-formu} follows.

Similarly, when $X\cap S=\emptyset$ and $u<t$, \eqref{af-fangsuo-formu} also follows.

Now assume that $X\cap S=\emptyset$ and $u\ge t$. By $S\subset F$, we have 
$$\dim(F\cap X)\ge\dim(S'\cap X')=u.$$ 
Thus
$$\dim(F\cap U)\ge\dim(S\vee(F\cap X))\ge s+u-u+1=s+1.$$
Together with $\dim U=x+s-u+1$, we obtain
$$|\mr|={x-u+1\b 1}\le{x-t+1\b1},\quad\mf_S\subset\bigcup_{R\in\mr}\mf_R.$$
Let $R\in\mr$ such that $|\mf_R|$ is the largest. Then \eqref{af-fangsuo-formu} follows.

If $k>s$, for $v\in\{s+1,\dots,k-1\}$, by Lemma \ref{gaosifangsuo} and $n\ge x+k-t+1$, we have
\begin{equation}\label{add}
	\dfrac{{x-t+1\b1}^{v+1-s}{n-v-1\b k-v-1}}{{x-t+1\b1}^{v-s}{n-v\b k-v}}=\dfrac{(q^{x-t+1}-1)(q^{k-v}-1)}{(q-1)(q^{n-v}-1)}\le q^{x+k-t+1-n}\le1.
\end{equation}
Then
$$|\mf_S|\le{x-t+1\b1}^{r-s}{n-r\b k-r}\le{x-t+1\b1}{n-s-1\b k-s-1},$$
as desired.
\end{proof}

\begin{lem}\label{af-yibanshangjie}
Let $n$, $k_1$, $k_2$ and $t$ be positive integers with $n\ge k_1+k_2-t+1$  and $k_1,k_2\ge t$. Suppose that $\mf\subset\mo(k_1,n)$ and $\mg\subset\mo(k_2,n)$ are cross $t$-intersecting. Then
	$$|\mf|\le q^{\tau_t(\mf)-t}{\tau_t(\mf)\b t}{k_2-t+1\b1}^{\tau_t(\mg)-t}{n-\tau_t(\mg)\b k_1-\tau_t(\mg)}.$$
\end{lem}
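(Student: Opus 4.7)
The plan is to bound $|\mf|$ in two stages. Stage one decomposes $\mf$ along a minimum $t$-cover of $\mf$, extracting the outer factor $q^{\tau_t(\mf)-t}{\tau_t(\mf)\b t}$. Stage two bounds each $|\mf_T|$ for a $t$-flat $T$ in that cover by iteratively using Lemma \ref{af-fangsuo} to ``push $T$ up'' to a flat of dimension at least $\tau_t(\mg)$, each push costing a factor ${k_2-t+1\b1}$. The key point enabling stage two is that by cross $t$-intersection, every $G\in\mg$ is itself a $t$-cover of $\mf$ of dimension $k_2$, so it can serve as the ``$X$'' of Lemma \ref{af-fangsuo}, producing the claimed exponent base ${k_2-t+1\b1}$.

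For stage one, let $X$ be a $t$-cover of $\mf$ with $\dim X=\tau_t(\mf)$. Every $F\in\mf$ satisfies $\dim(F\cap X)\ge t$ and $F\cap X\subset X$, so $F$ contains some $t$-flat of $X$. By Lemma \ref{af-jishu}(1), $X$ contains exactly $q^{\tau_t(\mf)-t}{\tau_t(\mf)\b t}$ many $t$-flats, and hence $|\mf|\le q^{\tau_t(\mf)-t}{\tau_t(\mf)\b t}\,\max_{T\in\mo(t,X)}|\mf_T|$, reducing the task to showing $|\mf_T|\le{k_2-t+1\b1}^{\tau_t(\mg)-t}{n-\tau_t(\mg)\b k_1-\tau_t(\mg)}$ for each $t$-flat $T$.

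For stage two, assume $\mg\neq\emptyset$ (else the bound is vacuous), set $S_0=T$, and iterate: while $S_i$ fails to be a $t$-cover of $\mg$, choose $G_i\in\mg$ with $\dim(S_i\cap G_i)<t$ and apply Lemma \ref{af-fangsuo} to the family $\mf$ with cover $X=G_i$ and flat $S=S_i$, producing $S_{i+1}\supsetneq S_i$ with $|\mf_{S_i}|\le{k_2-t+1\b1}^{\dim S_{i+1}-\dim S_i}|\mf_{S_{i+1}}|$. Since dimensions strictly increase, the process terminates at some $S_m$ that is a $t$-cover of $\mg$; by definition of $\tau_t(\mg)$, $\dim S_m\ge\tau_t(\mg)$. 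Telescoping and applying the trivial bound $|\mf_{S_m}|\le{n-\dim S_m\b k_1-\dim S_m}$ (Lemma \ref{af-jishu}(2)) gives $|\mf_T|\le{k_2-t+1\b1}^{\dim S_m-t}{n-\dim S_m\b k_1-\dim S_m}$. Finally, inequality \eqref{add} in the proof of Lemma \ref{af-fangsuo} (applied with $x\leftarrow k_2$, $k\leftarrow k_1$; its hypothesis $n\ge x+k-t+1$ becomes exactly our $n\ge k_1+k_2-t+1$) shows that the function $v\mapsto{k_2-t+1\b1}^{v-t}{n-v\b k_1-v}$ is non-increasing in $v$, so $\dim S_m\ge\tau_t(\mg)$ lets us replace $\dim S_m$ by $\tau_t(\mg)$ at the cost of enlarging the bound, yielding the stage-two estimate and completing the proof.

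The main obstacle is pure bookkeeping: checking Lemma \ref{af-fangsuo}'s hypotheses at every iteration (the non-$t$-intersection clause holds by our choice of $G_i$, while $G_i$ being a $t$-cover of $\mf$ of dimension $k_2$ is immediate from cross $t$-intersection), and aligning the exponent structure of \eqref{add} with the claimed formula. A minor edge case is $\dim S_m>k_1$, in which $\mf_{S_m}=\emptyset$ and the bound collapses to $0$; another is the very first invocation, where $\dim S_0=t$ slightly violates Lemma \ref{af-fangsuo}'s formal hypothesis $s\ge t+1$, but its proof argument (building $\mr=\{R:S\subset R\subset X\vee S,\dim R=r\}$ with $r>s$) carries through unchanged for $s=t$.
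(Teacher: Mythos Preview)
Your proposal is correct and follows essentially the same two-stage strategy as the paper: decompose $\mf$ over the $t$-subflats of a minimum $t$-cover, then iteratively apply Lemma~\ref{af-fangsuo} with members of $\mg$ as the auxiliary cover $X$ to push each such $t$-flat up to dimension at least $\tau_t(\mg)$, finishing via the monotonicity \eqref{add}. One minor correction: Lemma~\ref{af-fangsuo} is stated for $s\ge t$, not $s\ge t+1$, so the ``first invocation'' edge case you flag is not actually an issue.
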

\begin{proof}
Let $S$ be a $t$-cover of $\mf$ with dimension $\tau_t(\mf)$. By
\begin{equation}\label{af-yibanshangjie-f1}
	\mf\subset\bigcup_{W\in\mo(t,S)}\mf_W,
\end{equation}
we have
$$|\mf|\le q^{\tau_t(\mf)-t}{\tau_t(\mf)\b t}{n-t\b k_1-t}.$$
Therefore, when $\tau_t(\mg)=t$, the desired result holds. In the following, we may assume that $\tau_t(\mg)>t$.

Let $W_1\in\mo(t,S)$ with $\mf_{W_1}\neq\emptyset$. 
We first give an upper bound for $|\mf_{W_1}|$.
Since $\tau_t(\mg)>t$, there exists $G_1\in\mg$ such that $W_1$ does not $t$-intersect $G_1$. 
Note that $G_1$ is a $t$-cover of $\mf$. By Lemma \ref{af-fangsuo}, there exists a flat $W_2$ such that $\dim W_2>\dim W_1$ and
$$|\mf_{W_1}|\le{k_2-t+1\b1}^{\dim W_2-\dim W_1}|\mf_{W_2}|.$$
By $|\mf_{W_1}|>0$, we have $|\mf_{W_2}|>0$, which implies that $\dim W_2\le k_1$. 
If $\dim W_2<\tau_t(\mg)$, then there exists $G_2\in\mg$ such that $W_2$ does not $t$-intersect $G_2$.
Using Lemma \ref{af-fangsuo} repeatedly, there exist some flats $W_1,W_2,\dots,W_u$ such that $\dim W_{u-1}<\tau_t(\mg)\le\dim W_u\le k_1$ and
$$|\mf_{W_i}|\le{k_2-t+1\b1}^{\dim W_{i+1}-\dim W_i}|\mf_{W_{i+1}}|$$
 for each $i\in\{1,\dots,u-1\}$. 
Then
$$|\mf_{W_1}|\le{k_2-t+1\b1}^{\dim W_u-t}|\mf_{W_u}|\le{k_2-t+1\b1}^{\dim W_u-t}{n-\dim W_u\b k_1-\dim W_u}.$$
Together with $n\ge k_1+k_2-t+1$, \eqref{add} and $\dim W_u\ge\tau_t(\mg)$, we get
\begin{equation*}
	\begin{aligned}
		|\mf_{W_1}|\le{k_2-t+1\b1}^{\dim W_u-t}{n-\dim W_u\b k_1-\dim W_u}\le{k_2-t+1\b1}^{\tau_t(\mg)-t}{n-\tau_t(\mg)\b k_1-\tau_t(\mg)}.
	\end{aligned}
\end{equation*}
Then by (\ref{af-yibanshangjie-f1}), we obtain
$$|\mf|\le\sum_{W\in\mo(t,S)}|\mf_W|\le q^{\tau_t(\mf)-t}{\tau_t(\mf)\b t}{k_2-t+1\b1}^{\tau_t(\mg)-t}{n-\tau_t(\mg)\b k_1-\tau_t(\mg)},$$
as desired.
\end{proof}

\section{Proof of Theorem \ref{trivial}}

To prove Theorem \ref{trivial}, we need the following two lemmas.

\begin{lem}\label{af-dijian}
	Let $n$, $b$, $c$ and $t$ be positive integers with $n\ge b+c+3$ and $b\ge t+1$, $c\ge t$. For $x\in\{t,\dots,b\}$, write
	$$h_{b,c}(x)=q^{x-t}{x\b t}{c-t+1\b1}^{x-t}{n-x\b b-x},$$
	Then $h_{b,c}(x)$ is decreasing with respect to $x$.
\end{lem}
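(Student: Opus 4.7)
The natural approach is to compute the ratio $h_{b,c}(x+1)/h_{b,c}(x)$ for $x\in\{t,\ldots,b-1\}$ and show it is strictly less than $1$. All the dependence on $x$ is packaged into two Gaussian binomials and an explicit power of $q$, and the standard quotient identities give
\begin{equation*}
\frac{{x+1\b t}}{{x\b t}}=\frac{q^{x+1}-1}{q^{x-t+1}-1},\qquad \frac{{n-x-1\b b-x-1}}{{n-x\b b-x}}=\frac{q^{b-x}-1}{q^{n-x}-1},
\end{equation*}
so that
\begin{equation*}
\frac{h_{b,c}(x+1)}{h_{b,c}(x)}=q\cdot\frac{q^{x+1}-1}{q^{x-t+1}-1}\cdot\frac{q^{c-t+1}-1}{q-1}\cdot\frac{q^{b-x}-1}{q^{n-x}-1}.
\end{equation*}

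Next I would invoke Lemma \ref{gaosifangsuo}\rmo\ to bound each of the three rational factors. The first is $<q^{t+1}$, the second (which is just ${c-t+1\b1}$) is $<q^{c-t+1}$, and the third, whose numerator has smaller exponent than its denominator, is $<q^{b-n}$ by the second inequality in Lemma \ref{gaosifangsuo}\rmo. Multiplying these strict bounds together with the extra factor of $q$ yields
\begin{equation*}
\frac{h_{b,c}(x+1)}{h_{b,c}(x)}<q\cdot q^{t+1}\cdot q^{c-t+1}\cdot q^{b-n}=q^{b+c+3-n}.
\end{equation*}
The hypothesis $n\ge b+c+3$ then gives $q^{b+c+3-n}\le 1$, so the ratio is strictly less than $1$, which is exactly the claim. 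The conditions $b\ge t+1$ and $c\ge t$ are only needed to ensure the range $\{t,\ldots,b\}$ is nontrivial and that the factor ${c-t+1\b1}$ is a well-defined positive Gaussian binomial; $x\le b-1$ keeps us inside the domain when we compare $x$ with $x+1$.

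I do not foresee a genuine obstacle: the argument is essentially three applications of the elementary estimate in Lemma \ref{gaosifangsuo}, and the only mild care required is that at least one of those estimates is strict so that we obtain a strictly decreasing (and not merely non-increasing) sequence. Thus the entire write-up should fit in a few lines of algebra followed by the numerical comparison $b+c+3-n\le 0$.
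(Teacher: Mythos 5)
Your proof is correct and follows exactly the paper's argument: compute the ratio $h_{b,c}(x+1)/h_{b,c}(x)$, which equals $\dfrac{q(q^{x+1}-1)(q^{c-t+1}-1)(q^{b-x}-1)}{(q^{x-t+1}-1)(q-1)(q^{n-x}-1)}$, bound it by $q^{b+c+3-n}$ via Lemma \ref{gaosifangsuo}\rmo, and conclude it is $<1$ from $n\ge b+c+3$. No differences worth noting.
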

\begin{proof}
By Lemma \ref{gaosifangsuo} and $n\ge b+c+3$, for each $x\in\{t,\dots,b-1\}$, we have
$$\dfrac{h_{b,c}(x+1)}{h_{b,c}(x)}=\dfrac{q(q^{x+1}-1)(q^{c-t+1}-1)(q^{b-x}-1)}{(q^{x-t+1}-1)(q-1)(q^{n-x}-1)}<q^{b+c+3-n}\le1.$$
Then the desired result follows.
\end{proof}

\begin{lem}\label{af-cover1}
	Let $n$, $k_1$, $k_2$ and $t$ be positive integers with $n\ge k_1+k_2+1$ and $k_1,k_2\ge t$. Suppose $\mf_1\subset\mo(k_1,n)$ and $\mf_2\subset\mo(k_2,n)$ are maximal cross $t$-intersecting families. For each $i\in\{1,2\}$, let $\ms_i$ denote the set of all $t$-covers of $\mf_i$ with dimension $\tau_t(\mf_i)$. Then $\ms_1$ and $\ms_2$ are cross $t$-intersecting.
\end{lem}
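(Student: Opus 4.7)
The plan is to argue by contradiction: suppose there exist $S_1 \in \ms_1$ and $S_2 \in \ms_2$ such that $S_1$ does not $t$-intersect $S_2$. The strategy is to enlarge $S_1$ to a $k_2$-flat $S_1^*$ that still fails to $t$-intersect $S_2$; then the maximality of $\mf_2$ will force $S_1^* \in \mf_2$, contradicting that $S_2$ is a $t$-cover of $\mf_2$.

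First note that because every $F \in \mf_2$ is itself a $t$-cover of $\mf_1$ by the cross $t$-intersecting property, $\tau_t(\mf_1)\le k_2$, and hence $\dim S_1 \le k_2$; so an enlargement is possible. Using Lemma \ref{af-weishu} write $S_i = V_i + s_i$ with $V_i$ the associated vector subspace of $S_i$, and set $u=\dim(V_1\cap V_2)$. I would choose a basis of $\ff^n$ adapted to the flag $V_1\cap V_2\subset V_1,V_2\subset V_1+V_2\subset V_1+V_2+\langle s_2-s_1\rangle$, where the last inclusion is strict exactly when $S_1\cap S_2=\emptyset$ by Lemma \ref{af-weishu}(i), and let $e_1,e_2,\dots$ extend this flag to a full basis of $\ff^n$. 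Put $V_1^*=V_1+\langle e_1,\dots,e_{k_2-\tau_t(\mf_1)}\rangle$ and $S_1^*=V_1^*+s_1$. A routine dimension count using $\tau_t(\mf_2)\le k_1$ and $n\ge k_1+k_2+1$ verifies that enough basis vectors $e_i$ are available outside the flag.

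By the independence of the $e_i$ from $V_1+V_2$, one has $V_1^*\cap V_2=V_1\cap V_2$, and when $S_1\cap S_2=\emptyset$ we also have $s_2-s_1\notin V_1^*+V_2$. Applying Lemma \ref{af-weishu} again gives $\dim(S_1^*\cap S_2)=u<t$ in the nonempty case and $S_1^*\cap S_2=\emptyset$ in the empty case; in either situation $S_1^*$ does not $t$-intersect $S_2$. Since $S_1\subseteq S_1^*$, $S_1^*$ inherits from $S_1$ the property of being a $t$-cover of $\mf_1$, and $S_1^*\in\mm(k_2,n)$. Maximality of $\mf_2$ (as the companion of $\mf_1$ in a maximal cross $t$-intersecting pair) then forces $S_1^*\in\mf_2$; but $S_2$ is a $t$-cover of $\mf_2$, so $\dim(S_2\cap S_1^*)\ge t$, contradicting what was constructed. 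Therefore $\dim(S_1\cap S_2)\ge t$.

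The main subtlety is separating the affine and linear parts cleanly, since the meet formula in Lemma \ref{af-weishu}(iii) picks up an extra ``$+1$'' exactly when the flats are disjoint; working naively with the vector parts $V_1,V_2$ alone misses this. Building the extension from a single basis that also records the displacement $s_2-s_1$ is what lets the two cases be handled in a unified way, and is where the hypothesis $n\ge k_1+k_2+1$ is used.
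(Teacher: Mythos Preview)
Your proof is correct and follows essentially the same strategy as the paper: enlarge a minimum $t$-cover to a flat of the appropriate dimension while preserving the failure of $t$-intersection, then use maximality of the pair to place it in the family and derive a contradiction. The only difference is cosmetic---the paper extends \emph{both} $S_1$ and $S_2$ (to flats $F_2\in\mf_2$ and $F_1\in\mf_1$ respectively) and invokes the cross $t$-intersecting condition $\dim(F_1\cap F_2)\ge t$, whereas you extend only $S_1$ and use that $S_2$ is a $t$-cover of $\mf_2$; both use $n\ge k_1+k_2+1$ in the same way to control the linear part and the affine displacement $s_2-s_1$.
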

\begin{proof}
	Suppose $S_1=S_1'+s_1\in\ms_1$ and $S_2=S_2'+s_2\in\ms_2$. It is sufficient to show that $\dim(S_1\cap S_2)\ge t$.
	
	Since $n\ge k_1+k_2+1$, there exist $k_1$-flat $F_1$ and $k_2$-flat $F_2$ such that $S_1\subset F_2$ and $S_2\subset F_1$. By the maximality of $\mf_1$ and $\mf_2$, we have $F_1\in\mf_1$ and $F_2\in\mf_2$.
	
	If $S_1\cap S_2=\emptyset$, then $s_1-s_2\not\in S_1'+S_2'$. Since $n\ge k_1+k_2+1$, we may assume that $(F_1'+F_2')\cap\langle s_1-s_2\rangle=\{0\}$. Together with $F_1=F_1'+s_2$ and $F_2=F_2'+s_1$, we have $F_1\cap F_2=\emptyset$, a contradiction. Thus $S_1\cap S_2\neq\emptyset$.
	
	By $n\ge k_1+k_2+1$, we may assume that $F_1'\cap F_2'=S_1'\cap S_2'$. Together with $S_1\cap S_2\neq\emptyset$, we get
	$$\dim(S_1\cap S_2)=\dim(S_1'\cap S_2')=\dim(F_1'\cap F_2')=\dim(F_1\cap F_2)\ge t,$$
	as desired.
\end{proof}

\begin{proof}[\bf Proof of Theorem \ref{trivial}]
	
	Suppose that $\mf_1\subset\mo(k_1,n)$ and $\mf_2\subset\mo(k_2,n)$ are cross $t$-intersecting. If $\tau_t(\mf_1)=\tau_t(\mf_2)=t$, let $T_1$ and $T_2$ be $t$-covers of $\mf_1$ and $\mf_2$ with dimension $t$, respectively. By Lemma \ref{af-cover1}, we have $T_1=T_2:=T$.
	Then
	$$|\mf_1|\le{n-t\b k_1-t},\quad|\mf_2|\le{n-t\b k_2-t},$$
	and two equalities hold at the same time if and only if $\mf_i=\{F\in\mo(k_i,n): T\subset F\}$ for each $i\in\{1,2\}$.
	
	To finish our proof, it is sufficient to show that if $(\tau_t(\mf_1),\tau_t(\mf_2))\neq(t,t)$, then
	\begin{equation}\label{af-f1f2<nn}
		|\mf_1||\mf_2|<{n-t\b k_1-t}{n-t\b k_2-t}.
	\end{equation}
	By Lemma \ref{af-yibanshangjie} and $n\ge k_1+k_2-t+1$, we have
	\begin{equation*}
		\begin{aligned}
			|\mf_1||\mf_2|\le&\left(q^{\tau_t(\mf_1)-t}{\tau_t(\mf_1)\b t}{k_1-t+1\b1}^{\tau_t(\mf_1)-t}{n-\tau_t(\mf_1)\b k_2-\tau_t(\mf_1)}\right)\\
			&\cdot\left(q^{\tau_t(\mf_2)-t}{\tau_t(\mf_2)\b t}{k_2-t+1\b1}^{\tau_t(\mf_2)-t}{n-\tau_t(\mf_2)\b k_1-\tau_t(\mf_2)}\right).
		\end{aligned}
	\end{equation*}
	Note that $t\le\tau_t(\mf_1)\le k_2$ and $t\le\tau_t(\mf_2)\le k_1$.
	Then \eqref{af-f1f2<nn} follows from $n\ge k_1+k_2+3$, Lemma \ref{af-dijian} and $(\tau_t(\mf_1),\tau_t(\mf_2))\neq(t,t)$.
\end{proof}

Based on Theorem \ref{trivial},  we obtain a more general theorem.

\begin{thm}\label{trivial2}
	Let $d$, $n$, $t$, $k_1$,\dots, $k_d$ be positive integers with $d\ge2$, $k_1\ge k_2\ge\cdots\ge k_d\ge t$ and $n\ge k_1+k_2+3$. If $\mf_1\subset\mo(k_1,n)$,\dots, $\mf_d\subset\mo(k_d,n)$ satisfy that $\dim(F_1\cap\cdots\cap F_d)\ge t$ 
	for any $F_i\in\mf_i$, $i=1,\dots,d$. If $\prod_{i=1}^d|\mf_i|$ reaches to the maximum value, then there exists a $t$-flat $T$ such that $\mf_i=\{F\in\mo(k_i,n): T\subset F\}$ for each $i\in\{1,\dots,d\}$.
\end{thm}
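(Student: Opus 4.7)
The plan is to reduce the $d$-family problem to the pairwise case of Theorem~\ref{trivial}. For $d=2$ the statement reduces to Theorem~\ref{trivial} together with the observation that maximality upgrades ``every member contains a common $t$-flat $T$'' to the equality $\mf_i=\{F\in\mo(k_i,n):T\subset F\}$; so I focus on $d\ge 3$.

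Two immediate observations set the stage. First, the trivial configuration --- fix a $t$-flat $T$ and set $\mf_i=\{F\in\mo(k_i,n):T\subset F\}$ --- satisfies the joint condition and attains product $\prod_{i=1}^{d}{n-t\b k_i-t}$, so the maximum is at least this value. Second, the hypothesis $\dim(F_1\cap\cdots\cap F_d)\ge t$ immediately implies $\dim(F_i\cap F_j)\ge t$ for every pair, so each $(\mf_i,\mf_j)$ is cross $t$-intersecting. Since $k_i+k_j\le k_1+k_2$ and $n\ge k_1+k_2+3$, Theorem~\ref{trivial} applies to every pair and gives $|\mf_i||\mf_j|\le{n-t\b k_i-t}{n-t\b k_j-t}$ for all $i\ne j$.

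Now fix any index $j$ and multiply these pairwise bounds over $i\ne j$; routine rearrangement yields
\[
|\mf_j|^{d-2}\prod_{i=1}^{d}|\mf_i|\le{n-t\b k_j-t}^{d-2}\prod_{i=1}^{d}{n-t\b k_i-t}.
\]
Comparing with the lower bound from the trivial construction forces $|\mf_j|\le{n-t\b k_j-t}$ for each $j$, hence $\prod|\mf_i|\le\prod{n-t\b k_i-t}$, and equality of the product then forces $|\mf_i|={n-t\b k_i-t}$ for every $i$. In particular $(\mf_1,\mf_2)$ attains the Theorem~\ref{trivial} bound, so Theorem~\ref{trivial} provides a $t$-flat $T$ contained in every member of $\mf_1\cup\mf_2$; combined with the size equality for $i=1,2$, this yields $\mf_i=\{F\in\mo(k_i,n):T\subset F\}$ for $i=1,2$.

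It remains to propagate $T$ to $\mf_j$ for $j\ge 3$. Using Lemma~\ref{af-weishu} and $n\ge k_1+k_2+3$, choose $F_1\in\mf_1$ and $F_2\in\mf_2$ with $F_1\cap F_2=T$ (take their linear parts $F_1',F_2'\supset T'$ to meet exactly in $T'$, possible since $k_1+k_2-t\le n$). Fixing some $F_\ell\in\mf_\ell$ for each $\ell\notin\{1,2,j\}$ and any $F_j\in\mf_j$, the joint condition forces a subflat of $F_1\cap F_2=T$ to have dimension at least $t$; since $\dim T=t$, this subflat equals $T$, giving $T\subset F_j$. Hence $\mf_j\subset\{F\in\mo(k_j,n):T\subset F\}$, and the size equality $|\mf_j|={n-t\b k_j-t}$ upgrades this to equality. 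The only real obstacle is the index/algebra bookkeeping in step three together with the clean construction of $(F_1,F_2)$ meeting exactly in $T$, both handled by the uniform dimension bound $n\ge k_1+k_2+3$ and Lemma~\ref{af-weishu}.
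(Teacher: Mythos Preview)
Your argument is correct and follows the paper's overall strategy: reduce to the pairwise case via Theorem~\ref{trivial}, deduce that the maximum product equals $\prod_i{n-t\b k_i-t}$, and then identify the common $t$-flat. The execution differs in two places. For the product bound, the paper uses the identity $\bigl(\prod_s|\mf_s|\bigr)^{d-1}=\prod_{i<j}|\mf_i||\mf_j|$ and bounds each factor; you instead fix one index $j$ and multiply the $d-1$ bounds involving $j$, which additionally yields the individual inequalities $|\mf_j|\le{n-t\b k_j-t}$. For the propagation of $T$, the paper applies Theorem~\ref{trivial} to \emph{every} pair to obtain flats $T_{i,j}$ and then argues they must all coincide (otherwise $\mf_i$ would be trapped under a flat of dimension $\ge t+1$, contradicting $|\mf_i|={n-t\b k_i-t}$); you instead work only with the pair $(1,2)$, construct explicit $F_1\in\mf_1$, $F_2\in\mf_2$ with $F_1\cap F_2=T$, and use the $d$-fold intersection hypothesis directly to force $T\subset F_j$ for $j\ge3$. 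Both routes are short; your propagation step is slightly more hands-on (it needs the construction of $F_1,F_2$ meeting exactly in $T$, which is fine since $n\ge k_1+k_2-t$), while the paper's avoids this construction but invokes Theorem~\ref{trivial} for every pair. One small point worth making explicit in your write-up: the division implicit in your ``routine rearrangement'' and the choice of $F_\ell\in\mf_\ell$ for $\ell\notin\{1,2,j\}$ both require every $\mf_i$ to be nonempty, which follows immediately from maximality since the trivial configuration has positive product.
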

\begin{proof}
	For distinct $i,j\in\{1,\dots,d\}$, $\mf_i$ and $\mf_j$ are cross $t$-intersecting families. Then by Theorem \ref{trivial}, we have
	$$|\mf_i||\mf_j|\le{n-t\b k_i-t}{n-t\b k_j-t}.$$
	Thus
	\begin{equation}\label{af-cross-prod}
		\begin{aligned}
			\left(\prod_{s=1}^d|\mf_s|\right)^{d-1}=\prod_{1\le i<j\le d}|\mf_i||\mf_j|\le\prod_{1\le i<j\le d}{n-t\b k_i-t}{n-t\b k_j-t}=\left(\prod_{s=1}^d{n-t\b k_s-t}\right)^{d-1},
		\end{aligned}
	\end{equation}
	and equality holds if and only if $|\mf_i||\mf_j|={n-t\b k_i-t}{n-t\b k_j-t}$ for any $i,j\in\{1,\dots,d\}$.
	
	Note that the product the sizes of families $\{F\in\mo(k_1,n):S\subset F\}$, $i=1,\dots,d$, reaches to the upper bound of \eqref{af-cross-prod}, where $S$ is a $t$-flat.
	Therefore, by assumption and Theorem \ref{trivial}, for distinct $i,j\in\{1,\dots,d\}$, there exists a $t$-flat $T_{i,j}$ such that
	$$\mf_i=\{F\in\mo(k_i,n): T_{i,j}\subset F\},\quad\mf_j=\{F\in\mo(k_j,n): T_{i,j}\subset F\}.$$
	If there exists $j'\in\{1,2,\dots,d\}$, such that $T_{i,j'}\neq T_{i,j}$, then 
	$$\mf_i\subset\{F\in\mo(k_i,n): T_{i,j}\vee T_{i,j'}\subset F\}.$$
	Together with Lemma \ref{af-jishu} and $n\ge k_1+k_2+3$, $k_1\ge k_i$, $\dim(T_{i,j}\vee T_{i,j'})\ge t+1$, we get
	$${n-t-1\b k_i-t-1}<{n-t\b k_i-t}=|\mf_i|\le{n-t-1\b k_i-t-1},$$
	a contradiction.  Therefore, there exists a $t$-flat $T$ such that $T_{i,j}=T$ for any distinct $i,j\in\{1,\dots,d\}$. Then the desired result follows.
\end{proof}

\section{Proof of Theorem \ref{non-trivial}}

Suppose $M\in\mm(k_2+1,n)$,  $S\in\mm(t+1,n)$ and $T\in\mm(t,M)$. Let 	$\mma_1(M,T;k_1,t)$, 
$\mma_2(M,T;k_2,t)$, 
$\mma_3(S;k_1)$ and
$\mma_4(S;k_2,t)$ are families defined in \eqref{c1-c4}. By \cite[Theorem 1.21]{JS}, $|\mma_1(M,T;k_1,t)|\cdot|\mma_2(M,T;k_2,t)|$ and $|\mma_3(S;k_1)|\cdot|\mma_4(S;k_2,t)|$ are independent on the choice of $M$, $T$ and $S$. Write
\begin{equation*}
	\begin{aligned}
		a_1(n,k_1,k_2,t)&=|\mma_1(M,T;k_1,t)|\cdot|\mma_2(M,T;k_2,t)|,\\
		a_2(n,k_1,k_2,t)&=|\mma_3(S;k_1)|\cdot|\mma_4(S;k_2,t)|.
	\end{aligned}
\end{equation*}

By Lemma \ref{af-jishu}, we have
\begin{equation}\label{cf3cf4-size}
	\dfrac{a_2(n,k_1,k_2,t)}{{n-t-1\b k_1-t-1}}=q{t+1\b1}{n-t\b k_2-t}-\left(q{t+1\b1}-1\right){n-t-1\b k_2-t-1}
\end{equation}
and
\begin{equation}\label{af-weiba}
	\begin{aligned}
		&|\{F\in\mo(k_2,M): \dim(F\cap T)=t-1\}|\\
		=&\sum_{U\in\mo(t-1,T)}|\{F\in\mo(k_2,M): U\subset F\}\bs\{F\in\mo(k_2,M): T\subset F\}|\\
		=&\ q{t\b1}\left({k_2+1-(t-1)\b1}-{k_2-t+1\b1}\right)\\
		=&\ q^{k_2-t+2}{t\b1}.
	\end{aligned}
\end{equation}
We first prove some inequalities for $a_1(n,k_1,k_2,t)$  and $a_2(n,k_1,k_2,t)$.

\begin{lem}\label{af-cf1-cf2}
	Let $n$, $k_1$, $k_2$ and $t$ be positive integers with $k_1\ge k_2\ge t+1$ and $n\ge k_1+k_2+3$.
	\begin{itemize}
		\item[\rmo] If $k_2\ge2t+1$, then $a_1(n,k_1,k_2,t)>a_2(n,k_1,k_2,t)$.
		\item[\rmt] If $k_2<2t+1$, then $a_1(n,k_1,k_2,t)<a_2(n,k_1,k_2,t)$.
	\end{itemize}
\end{lem}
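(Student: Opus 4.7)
The plan is to obtain closed-form sizes for the four families, rearrange $a_1-a_2$ into a convenient shape, and then compare $q$-degrees to pin down the sign.

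\emph{Closed forms.} Lemma~\ref{af-jishu} gives $|\mma_3(S;k_1)|={n-t-1\b k_1-t-1}$, and combined with \eqref{af-weiba} yields $|\mma_2(M,T;k_2,t)|={n-t\b k_2-t}+q^{k_2-t+2}{t\b 1}$. The size $|\mma_4(S;k_2,t)|$ is already recorded in \eqref{cf3cf4-size}. For $|\mma_1(M,T;k_1,t)|$ I would translate a point of $T$ to the origin so that $T$ becomes a $t$-subspace of the $(k_2+1)$-subspace $M$ of $\ff^n$; passing to the quotient $\ff^n/T$, elements of $\mma_1$ correspond to $(k_1-t)$-subspaces of $\ff^n/T$ (dimension $n-t$) meeting $M/T$ (dimension $k_2+1-t$) nontrivially. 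By Lemma~\ref{af-QY} with $(m,h)=(k_1-t,0)$ and $(m_1,h_1)=(0,0)$, the complementary count is $q^{(k_1-t)(k_2+1-t)}{n-k_2-1\b k_1-t}$, so
$$|\mma_1(M,T;k_1,t)|={n-t\b k_1-t}-q^{(k_1-t)(k_2+1-t)}{n-k_2-1\b k_1-t}.$$

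\emph{Rearrangement.} Set $\alpha={n-t-1\b k_1-t-1}$, $\beta={n-t-1\b k_1-t}$, $\delta={n-t-1\b k_2-t}$, $\epsilon={n-k_2-1\b k_1-t}$. Applying the Pascal-type identity ${n-t\b k-t}={n-t-1\b k-t-1}+q^{k-t}{n-t-1\b k-t}$ inside $|\mma_1|$, $|\mma_2|$, and \eqref{cf3cf4-size}, one first reduces $a_2$ to $\alpha\bigl({n-t-1\b k_2-t-1}+q^{k_2-t+1}{t+1\b 1}\delta\bigr)$, and then direct expansion yields
$$a_1-a_2 \;=\; q^{k_1-t}|\mma_2|\,X \;-\; \alpha\,Y,$$
where $X:=\beta-q^{(k_1-t)(k_2-t)}\epsilon$ and $Y:=q^{k_2-t}\bigl(q{t+1\b 1}-1\bigr)\delta-q^{k_2-t+2}{t\b 1}$. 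Both $X$ and $Y$ are positive: $X$ has the same form as $|\mma_1|$ but in the ambient $(n-1)$-dimensional affine space with a $k_2$-flat in place of $M$, so $X>0$ by the same counting argument (since $k_2\ge t+1$ and $n-1\ge k_1+k_2+2$); and $Y>0$ follows from Lemma~\ref{gaosifangsuo}, which makes $\delta$ dwarf the constant correction $q^{k_2-t+2}{t\b 1}$ once $n\ge k_1+k_2+3$.

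\emph{Sign analysis and the boundary case.} Regarding $a_1-a_2$ as a polynomial in $q$, I would compare the $q$-degrees of the two pieces. Expanding $X=\sum_{j\ge 1}{k_2-t\b j}q^{(k_1-t-j)(k_2-t-j)}{n-k_2-1\b k_1-t-j}$ via the standard partition by intersection dimension, the $j=1$ term dominates and gives $\deg X=(k_2-t-1)+(k_1-t-1)(n-k_1-1)$ with leading coefficient $1$. Combining with $\deg |\mma_2|=(k_2-t)(n-k_2)$ and $\deg Y=t+1+(k_2-t)(n-k_2)$, a short calculation gives
$$\deg\bigl(q^{k_1-t}|\mma_2|\,X\bigr)-\deg(\alpha Y)\;=\;k_2-2t-1.$$
Thus, under $n\ge k_1+k_2+3$ — which by Lemma~\ref{gaosifangsuo} makes all lower-order corrections negligible — the piece of higher $q$-degree dominates: for $k_2\ge 2t+2$, $q^{k_1-t}|\mma_2|X$ dominates and $a_1>a_2$; for $k_2\le 2t$, $\alpha Y$ dominates and (since it enters with a minus sign) $a_1<a_2$. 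The main obstacle is the boundary case $k_2=2t+1$, where the two $q$-degrees coincide and the leading coefficients (both equal to $1$) cancel. There I would go to the next-order term: using the expansion $q{t+1\b 1}-1=q+q^2+\cdots+q^{t+1}-1$ together with the subleading terms of the Gaussian binomials involved, the net residue should simplify to a strictly positive quantity of order $q^{t+1}\alpha\delta$, establishing $a_1>a_2$ at the boundary as well.
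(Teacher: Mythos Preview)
Your rearrangement $a_1-a_2=q^{k_1-t}|\mma_2|\,X-\alpha Y$ is correct, as is the degree computation $\deg(q^{k_1-t}|\mma_2|X)-\deg(\alpha Y)=k_2-2t-1$. But the passage from a $q$-degree comparison to an actual inequality is not justified. Lemma~\ref{gaosifangsuo} bounds each Gaussian binomial only within a multiplicative window of size $q^{i}$ (with $i$ the lower index), and these windows accumulate across the several factors on each side. A mere gap of $1$ in degree (e.g.\ at $k_2=2t+2$ or $k_2=2t$) does not automatically survive this slack under the hypothesis $n\ge k_1+k_2+3$; you would still need explicit ratio estimates, which your outline omits.

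More seriously, the boundary case $k_2=2t+1$ is exactly where the statement is tightest, and your treatment (``the net residue should simplify to a strictly positive quantity of order $q^{t+1}\alpha\delta$'') is a hope, not a proof. The second-order contributions come from many sources---the subleading coefficients of ${n-t\b k_2-t}$, of $\delta$, of $\alpha$, the $j=2$ term in your expansion of $X$, the tail $-q^{k_2-t+2}{t\b 1}$ in $Y$, etc.---and nothing in your outline organizes or controls them. The paper sidesteps this entirely and argues quite differently: for part~(1) it lower-bounds $|\mma_1(M,T;k_1,t)|$ by the explicit quantity $a_0(n,k_1,k_2+1,t)$ of Lemma~\ref{af-suanliangci-xiajie}, shows $a_0(n,k_1,s,t)$ is monotone in $s$, and thereby reduces every $k_2\ge 2t+1$ to the single critical value $s=2t+2$, where a one-line estimate via Lemma~\ref{gaosifangsuo} finishes. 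For part~(2) it uses the crude upper bound $|\mma_1|\le{k_2-t+1\b 1}{n-t-1\b k_1-t-1}$ together with ${k_2-t+1\b 1}\le{t+1\b 1}$ (valid precisely when $k_2\le 2t$), after which the comparison with \eqref{cf3cf4-size} is immediate. Your decomposition could perhaps be pushed through with substantial additional bounding, but as written it does not establish the boundary case, and even the non-boundary cases still require the ratio work to be made rigorous.
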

\begin{proof}
Write
$$a_3(n,k_1,k_2,t)=a_2(n,k_1,k_2,t)-a_1(n,k_1,k_2,t).$$

(1) Assume that $k_2\ge2t+1$. For $a\in\{t+1,\dots,k_2\}$, by Lemma \ref{gaosifangsuo} and $n\ge k_1+k_2-t$, we have
\begin{align*}
	\dfrac{a_0(n,k_1,a+1,t)-a_0(n,k_1,a,t)}{q^{a-t}{n-t-1\b k_1-t-1}}&=1-\dfrac{q^{k_1-t-1}-1}{q^{n-t-1}-1}{a-t\b1}\ge1-q^{k_1+k_2-t-n}\ge0.
\end{align*}
 Let $M\in\mm(k_2+1,n)$ and $T\in\mm(t,M)$. 
Together with Lemma \ref{af-suanliangci-xiajie}, \eqref{cf3cf4-size} and $k_2\ge2t+1$, $n\ge k_1+k_2+3\ge k_1+2t+3$, we get
\begin{align*}
	\dfrac{a_3(n,k_1,k_2,t)}{{n-t-1\b k_1-t-1}{n-t\b k_2-t}}&<\dfrac{a_2(n,k_1,k_2,t)}{{n-t-1\b k_1-t-1}{n-t\b k_2-t}}-\dfrac{|\mma_1(M,T;k_1,t)|}{{n-t-1\b k_1-t-1}}\\
	&\le q{t+1\b1}-\dfrac{a_0(n,k_1,k_2+1,t)}{{n-t-1\b k_1-t-1}}\\
	&\le q{t+1\b1}-\dfrac{a_0(n,k_1,2t+2,t)}{{n-t-1\b k_1-t-1}}\\
	&=\dfrac{q(q^{k_1-t-1}-1)}{q^{n-t-1}-1}{t+2\b2}-1\\
	&<q^{k_1+2t+3-n}-1\\
	&\le0,
\end{align*}
which implies that $a_1(n,k_1,k_2,t)>a_2(n,k_1,k_2,t)$.

(2) Now assume that $k_2\le2t$. By Lemma \ref{af-jishu} and \eqref{af-weiba}, we obtain
\begin{align*}
	a_1(n,k_1,k_2,t)&\le{k_2-t+1\b1}{n-t-1\b k_1-t-1}\left({n-t\b k_2-t}+q^{k_2-t+2}{t\b1}\right)\\
	&\le{t+1\b1}{n-t-1\b k_1-t-1}\left({n-t\b k_2-t}+q^{k_2-t+2}{t\b1}\right).
\end{align*}
Together with $n\ge k_1+k_2+3$, $k_1\ge k_2$, \eqref{cf3cf4-size} and Lemma \ref{gaosifangsuo}, we get
\begin{align*}
	\dfrac{a_3(n,k_1,k_2,t)}{{n-t-1\b k_1-t-1}{n-t\b k_2-t}}&\ge q{t+1\b1}-\left(q{t+1\b1}-1\right)\dfrac{q^{k_2-t}-1}{q^{n-t}-1}-{t+1\b1}\left(1+\dfrac{q^{k_2-t+2}{t\b1}}{{n-t\b k_2-t}}\right)\\
	&\ge(q-1){t+1\b1}-q^{k_2+t+2-n}-\dfrac{q^{k_2+t+3}}{q^{(k_2-t)(n-k_2)}}\\
	&\ge q^{t+1}-1-q^{k_2+t+2-n}-q^{2k_2+t+3-n}\\
	&\ge q^{t+1}-1-q^{-k_1+t-1}-q^{-k_1+k_2+t}\\
	&\ge q^{t+1}-1-q^{-2}-q^{t}\\
	&>0,
\end{align*}
which implies that $a_1(n,k_1,k_2,t)<a_2(n,k_1,k_2,t)$.
\end{proof}

\begin{lem}\label{af-cf-cf'}
		Let $n$, $k_1$, $k_2$ and $t$ be positive integers with $n\ge k_1+k_2+t+3$ and $k_1>k_2\ge t+1$. Then the following hold.
	\begin{itemize}
		\item[\rmo] $a_1(n,k_1,k_2,t)>a_1(n,k_2,k_1,t)$.
		\item[\rmt] $a_2(n,k_1,k_2,t)>a_2(n,k_2,k_1,t)$.
	\end{itemize}
\end{lem}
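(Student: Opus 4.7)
My plan is to derive explicit Gaussian-binomial expressions for all four quantities involved and then to compare $a_i(n,k_1,k_2,t)$ with $a_i(n,k_2,k_1,t)$ directly. Part (2) is short and clean; Part (1) is substantive.

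For Part (2), the expansion of $a_2$ is already given by \eqref{cf3cf4-size}. Subtracting the analogous expression with $k_1,k_2$ swapped, the terms carrying the coefficient $q{t+1 \b 1}-1$ are symmetric in $(k_1,k_2)$ and cancel, leaving
\begin{equation*}
a_2(n,k_1,k_2,t) - a_2(n,k_2,k_1,t) = q{t+1 \b 1}\left({n-t-1 \b k_1-t-1}{n-t \b k_2-t} - {n-t-1 \b k_2-t-1}{n-t \b k_1-t}\right).
\end{equation*}
Applying ${n-t \b k-t} = \frac{q^{n-t}-1}{q^{k-t}-1}{n-t-1 \b k-t-1}$, the bracket factors as a positive quantity times $\frac{1}{q^{k_2-t}-1} - \frac{1}{q^{k_1-t}-1}$, which is positive because $k_1 > k_2$.

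For Part (1), I would first derive explicit counts. Identity \eqref{af-weiba} together with Lemma \ref{af-jishu} yields $|\mma_2(M,T;k_2,t)| = {n-t \b k_2-t} + q^{k_2-t+2}{t \b 1}$. For $|\mma_1(M,T;k_1,t)|$, since $T \subset F \cap M$ whenever $T \subset F$, the condition $\dim(F \cap M) \ge t+1$ is equivalent to $F \cap M \ne T$; counting the complement via the quotient $\ff^{n-t}$ (where one seeks $(k_1-t)$-subspaces meeting the image of $M$ only at the origin) and Lemma \ref{af-QY} yields $|\mma_1(M,T;k_1,t)| = {n-t \b k_1-t} - q^{(k_2+1-t)(k_1-t)}{n-k_2-1 \b k_1-t}$. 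Writing $X(k)={n-t \b k-t}$, $Y(k,\ell)=q^{(\ell+1-t)(k-t)}{n-\ell-1 \b k-t}$, $Z(k)=q^{k-t+2}{t \b 1}$, I expand and regroup the difference as
\begin{equation*}
a_1(n,k_1,k_2,t) - a_1(n,k_2,k_1,t) = \Delta_1 + \Delta_2,
\end{equation*}
where $\Delta_1 = Y(k_2,k_1)X(k_1) - Y(k_1,k_2)X(k_2)$ and $\Delta_2 = Z(k_2)(X(k_1)-Y(k_1,k_2)) - Z(k_1)(X(k_2)-Y(k_2,k_1))$. For $\Delta_1$, applying ${a \b b}{a-b \b c} = {a \b c}{a-c \b b}$ with $(a,b,c) = (n-t, k_1-t, k_2-t)$ together with the telescoping ${a \b b}/{a-1 \b b} = (q^a-1)/(q^{a-b}-1)$ collapses the Gaussian-binomial ratio to $(q^{n-k_2}-1)/(q^{n-k_1}-1)$; the $q$-prefactor exponents differ by $k_2-k_1$, giving $Y(k_2,k_1)X(k_1)/[Y(k_1,k_2)X(k_2)] = q^{k_2-k_1}(q^{n-k_2}-1)/(q^{n-k_1}-1)$, which exceeds $1$ iff $q^{k_2-k_1}<1$, i.e., $k_1>k_2$.

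The main obstacle is proving $\Delta_2 > 0$, equivalently $|\mma_1(M,T;k_1,t)| > q^{k_1-k_2}|\mma_1(M',T;k_2,t)|$ where $M'$ is a $(k_1+1)$-flat. I would bound the left side below by $a_0(n,k_1,k_2+1,t)$ via Lemma \ref{af-suanliangci-xiajie} and the right side above by ${k_1-t+1 \b 1}{n-t-1 \b k_2-t-1}$ (double-counting pairs $(I,F)$ with $I$ a $(t+1)$-flat through $T$ in $M'$ and $I \subset F$). The resulting Gaussian-binomial inequality has leading $q$-exponent of order $(k_1-k_2)(n+t-k_1-k_2+1)$, which under $n \ge k_1+k_2+t+3$ is at least $(k_1-k_2)(2t+4)$; meanwhile the subtracted term $q{k_2-t+1 \b 2}{n-t-2 \b k_1-t-2}$ in $a_0$ is smaller than its main term by a factor $\approx q^{n-k_1-k_2+t} \ge q^{2t+3}$, so can be absorbed. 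Unlike $\Delta_1$, no clean cancellation is available for $\Delta_2$, and the careful bookkeeping of the two one-sided estimates under the hypothesis on $n$ is the delicate step.
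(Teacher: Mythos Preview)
Your proposal is correct and follows essentially the same route as the paper. Part~(2) is identical. For Part~(1) you derive the same closed form for $|\mma_1|$ (the paper's \eqref{af-cf1-size}), make the same decomposition of the difference into two pieces (your $\Delta_1,\Delta_2$ are exactly the paper's two bracketed expressions), and for $\Delta_2$ you invoke the same lower bound $a_0(n,k_1,k_2+1,t)$ from Lemma~\ref{af-suanliangci-xiajie} together with the trivial upper bound ${k_1-t+1\b1}{n-t-1\b k_2-t-1}$. The only distinction is that the paper does not verify the positivity of the two pieces itself but cites \cite[Lemma~5.6]{CAO}, whereas you supply a direct computation for $\Delta_1$ (your identity $Y(k_2,k_1)X(k_1)/[Y(k_1,k_2)X(k_2)]=q^{k_2-k_1}(q^{n-k_2}-1)/(q^{n-k_1}-1)$ is correct, and indeed this exceeds $1$ precisely when $k_1>k_2$) and a $q$-exponent estimate for $\Delta_2$. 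Your exponent count $(k_1-k_2)(n+t-k_1-k_2+1)$ is right, and under $n\ge k_1+k_2+t+3$ it dominates the losses from the crude Lemma~\ref{gaosifangsuo} bounds and from absorbing the second term of $a_0$, so the sketch for $\Delta_2$ goes through. In short: same argument, but yours is self-contained rather than deferring to \cite{CAO}.
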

\begin{proof}

(1) Suppose $M\in\mm(k_2+1,n)$ and $T\in\mm(t,M)$. By Lemmas \ref{af-QY} and \ref{af-weishu}, we have
\begin{equation}\label{af-cf1-size}
	\begin{aligned}
		|\mma_1(M,T;k_1,t)|&=\left|\left\{K\in{\ff^n\b k_1}: T'\subset K, \dim(K\cap M')\ge t+1\right\}\right|\\
		&={n-t\b k_1-t}-q^{(k_2-t+1)(k_1-t)}{n-k_2-1\b k_1-t}.
	\end{aligned}
\end{equation}
Together with \eqref{af-weiba}, we get
$$a_1(n,k_1,k_2,t)=\left({n-t\b k_1-t}-q^{(k_2-t+1)(k_1-t)}{n-k_2-1\b k_1-t}\right)\left({n-t\b k_2-t}+q^{k_2-t+2}{t\b1}\right).$$
Then from Lemma \ref{af-suanliangci-xiajie}, we obtain
\begin{align*}
	&\ a_1(n,k_1,k_2,t)-a_1(n,k_2,k_1,t)\\
	=&\left(q^{(k_1-t+1)(k_2-t)}{n-k_1-1\b k_2-t}{n-t\b k_1-t}-q^{(k_2-t+1)(k_1-t)}{n-k_2-1\b k_1-t}{n-t\b k_2-t}\right)\\
	&+q^{k_2-t+2}{t\b1}\left({n-t\b k_1-t}-q^{(k_2-t+1)(k_1-t)}{n-k_2-1\b k_1-t}\right)\\
	&-q^{k_1-t+2}{t\b1}\left({n-t\b k_2-t}-q^{(k_1-t+1)(k_2-t)}{n-k_1-1\b k_2-t}\right)\\
	\ge&\left(q^{(k_1-t+1)(k_2-t)}{n-k_1-1\b k_2-t}{n-t\b k_1-t}-q^{(k_2-t+1)(k_1-t)}{n-k_2-1\b k_1-t}{n-t\b k_2-t}\right)\\
	&+q^{k_2-t+2}{t\b1}\left(a_0(n,k_1,k_2+1,t)-q^{k_1-k_2}{k_1-t+1\b1}{n-t-1\b k_2-t-1}\right)
\end{align*}
Since~$n\ge k_1+k_2+t+3$, by the proof of \cite[Lemma 5.6]{CAO}, we have
\begin{align*}
	&q^{(k_1-t+1)(k_2-t)}{n-k_1-1\b k_2-t}{n-t\b k_1-t}-q^{(k_2-t+1)(k_1-t)}{n-k_2-1\b k_1-t}{n-t\b k_2-t}>0,\\
	&a_0(n,k_1,k_2+1,t)-q^{k_1-k_2}{k_1-t+1\b1}{n-t-1\b k_2-t-1}>0.
\end{align*}
 Then $a_1(n,k_1,k_2,t)>a_1(n,k_2,k_1,t)$.

(2) By $k_1>k_2$ and \eqref{cf3cf4-size}, we have
$$\dfrac{a_2(n,k_1,k_2,t)-a_2(n,k_2,k_1,t)}{q{t+1\b1}{n-t-1\b k_1-t-1}{n-t-1\b k_2-t-1}}=\dfrac{q^{n-t}-1}{q^{k_2-t}-1}-\dfrac{q^{n-t}-1}{q^{k_1-t}-1}>0.$$
Then the desired result follows.
\end{proof}

To present the proof of Theorem \ref{non-trivial} concisely, we need the following two lemmas.
\begin{lem}\label{af-upper-other}
	Let $n$, $k_1$, $k_2$ and $t$ be positive integers with $k_1,k_2\ge t+1$ and $n\ge k_1+k_2+t+7$. Suppose $\mf_1\subset\mo(k_1,n)$ and $\mf_2\subset\mo(k_2,n)$ are non-trivial cross $t$-intersecting with $\tau_t(\mf_2)\ge\tau_t(\mf_1)$ and $(\tau_t(\mf_1),\tau_t(\mf_2))\neq(t,t+1)$. Then $|\mf_1||\mf_2|<a_1(n,k_1,k_2,t)$.
\end{lem}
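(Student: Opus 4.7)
Let $\alpha=\tau_t(\mf_1)$ and $\beta=\tau_t(\mf_2)$. By hypothesis $t\le\alpha\le\beta$, $\alpha\le k_2$, $\beta\le k_1$, and $(\alpha,\beta)\ne(t,t+1)$. I first dispose of the case $(\alpha,\beta)=(t,t)$: here $\mf_i\subset\{F\in\mo(k_i,n):T_i\subset F\}$ for some $t$-flats $T_i$, and non-triviality of the pair forces that $T_1$ is not contained in every $F\in\mf_1\cup\mf_2$, so some $F_2^*\in\mf_2$ satisfies $T_1\not\subset F_2^*$. Setting $M:=T_1\vee F_2^*$ (of dimension $\ge k_2+1$), the cross $t$-intersecting condition forces $\dim(F\cap M)\ge t+1$ for every $F\in\mf_1$, hence (when $\dim M=k_2+1$) $|\mf_1|\le|\mma_1(M,T_1;k_1,t)|$ and an even smaller bound when $\dim M>k_2+1$. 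Since also $|\mf_2|\le{n-t\b k_2-t}<|\mma_2(M,T_1;k_2,t)|$ by \eqref{af-weiba}, this already yields $|\mf_1||\mf_2|<a_1$. So from now on $(\alpha,\beta)$ is in Case A ($\alpha=t$ and $\beta\ge t+2$) or Case B ($\alpha\ge t+1$ and $\beta\ge\alpha$).

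Applying Lemma \ref{af-yibanshangjie} twice---first with $(\mf,\mg)=(\mf_1,\mf_2)$ and then with $(\mf,\mg)=(\mf_2,\mf_1)$---the two bounds multiply to give
\[
|\mf_1||\mf_2|\le h_{k_2,k_1}(\alpha)\cdot h_{k_1,k_2}(\beta),
\]
where $h_{b,c}(x)=q^{x-t}{x\b t}{c-t+1\b1}^{x-t}{n-x\b b-x}$ is exactly the function of Lemma \ref{af-dijian}. Both $h_{k_2,k_1}$ and $h_{k_1,k_2}$ are decreasing in their argument (as $n\ge k_1+k_2+3$), so in Case A the product is bounded by $h_{k_2,k_1}(t)\,h_{k_1,k_2}(t+2)$, and in Case B by $h_{k_2,k_1}(t+1)\,h_{k_1,k_2}(t+1)$.

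It remains to verify each of these two extremal products is strictly less than $a_1(n,k_1,k_2,t)$. For the lower bound on $a_1$, I would combine $|\mma_1(M,T;k_1,t)|\ge a_0(n,k_1,k_2+1,t)$ from Lemma \ref{af-suanliangci-xiajie} with $|\mma_2(M,T;k_2,t)|\ge{n-t\b k_2-t}$. Expanding each ratio using Lemma \ref{gaosifangsuo} and the identity ${n-t\b k-t}=\frac{q^{n-t}-1}{q^{k-t}-1}{n-t-1\b k-t-1}$, after cancellations each ratio is bounded above by a fixed polynomial in $q^{t+1}$ times $q^{k_1+k_2+t-n+c}$ for a small constant $c$, which is strictly less than $1$ once $n\ge k_1+k_2+t+7$. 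The main obstacle is exactly this bookkeeping: both $|\mma_1|$ and each candidate upper bound sit at the reduced order $q^{(k_1-t-1)(n-k_1)}$ rather than the naive $q^{(k_1-t)(n-k_1)}$, so strictness of the inequality hinges on tracking sub-leading factors---in particular the power ${k_2-t+1\b1}^{\beta-t}$ with $\beta-t\in\{1,2\}$ in $h_{k_1,k_2}(\beta)$ against the leading ${k_2-t+1\b1}$ in $a_0$---where the slack $n-k_1-k_2-t\ge 7$ is exactly what is needed.
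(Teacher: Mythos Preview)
Your overall architecture coincides with the paper's: apply Lemma~\ref{af-yibanshangjie} in both directions to get $|\mf_1||\mf_2|\le h_{k_2,k_1}(\alpha)\,h_{k_1,k_2}(\beta)$, use the monotonicity of Lemma~\ref{af-dijian} to reduce to the two extremal configurations, and then compare against the lower bound $a_1\ge a_0(n,k_1,k_2+1,t){n-t\b k_2-t}$ coming from Lemma~\ref{af-suanliangci-xiajie}. The paper carries out the two final numerical comparisons explicitly; your sketch of that step is accurate in spirit but is not yet a proof.

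There is, however, a genuine gap in your handling of the case $(\alpha,\beta)=(t,t)$. Your claim that one obtains ``an even smaller bound when $\dim M>k_2+1$'' is false: the family $\{F\in\mo(k_1,n):T_1\subset F,\ \dim(F\cap M)\ge t+1\}$ is a union over the ${m-t\b1}$ many $(t+1)$-flats between $T_1$ and $M$, so its size \emph{grows} with $m=\dim M$, and the bound worsens rather than improves. Since you only secured $T_1\not\subset F_2^*$, the flat $M=T_1\vee F_2^*$ may have dimension as large as $k_2+t+1$, and then the containment $\mf_1\subset\mma_1(M,T_1;k_1,t)$ is simply not available. The paper's own proof in fact passes directly from $\tau_t(\mf_1)=t$ to $\tau_t(\mf_2)\ge t+2$ without comment; in the intended application (the proof of Theorem~\ref{non-trivial}) the families are maximal, so Lemma~\ref{af-cover1} forces any two minimum $t$-covers of dimension $t$ to coincide, and the $(t,t)$ case is then excluded by non-triviality. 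If you want the lemma to stand on its own without maximality, the $(t,t)$ case requires a different argument from the one you gave.
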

\begin{proof}
Suppose $\tau_t(\mf_1)=t$. We have $\tau_t(\mf_2)\ge t+2$. By Lemmas \ref{af-yibanshangjie} and \ref{af-dijian}, we obtain
\begin{align*}
	|\mf_1||\mf_2|&\le\left(q^{\tau_t(\mf_2)-t}{\tau_t(\mf_2)\b t}{k_2-t+1\b1}^{\tau_t(\mf_2)-t}{n-\tau_t(\mf_2)\b k_1-\tau_t(\mf_2)}\right){n-t\b k_2-t}\\
	&\le\left(q^2{t+2\b 2}{k_2-t+1\b1}^{2}{n-t-2\b k_1-t-2}\right){n-t\b k_2-t}.
\end{align*}
By Lemma \ref{gaosifangsuo} and $n\ge k_1+k_2+t+7$, we have
\begin{align*}
	&\left({a_0(n,k_1,k_2+1,t)-q^2{t+2\b 2}{k_2-t+1\b1}^{2}{n-t-2\b k_1-t-2}}\right){n-t-2\b k_1-t-2}^{-1}\\
	=&\ \dfrac{q^{n-t-1}-1}{q^{k_1-t-1}-1}{k_2-t+1\b1}-q{k_2-t+1\b2}-q^2{t+2\b 2}{k_2-t+1\b1}^{2}\\
	>&\ q^{n-k_1+k_2-t}-q^{2k_2-2t+1}-q^{2k_2+6}\\
	>&\ q^{n-k_1+k_2-t}-q^{2k_2+7}\\
	\ge&\ 0.
\end{align*}
Together with Lemma \ref{af-suanliangci-xiajie}, we get
$$|\mf_1||\mf_2|<a_0(n,k_1,k_2+1,t){n-t\b k_2-t}\le a_1(n,k_1,k_2,t).$$

Now assume that $\tau_t(\mf_1)\ge t+1$. Note that $\tau_t(\mf_2)\ge t+1$.
By Lemmas \ref{af-yibanshangjie} and \ref{af-dijian}, we have
\begin{align*}
	|\mf_1||\mf_2|&\le q^2{t+1\b1}^2{k_1-t+1\b1}{k_2-t+1\b1}{n-t-1\b k_1-t-1}{n-t-1\b k_2-t-1}.
\end{align*}
Then from Lemmas \ref{gaosifangsuo}, \ref{af-suanliangci-xiajie} and $n\ge k_1+k_2+t+7$, we obtain
\begin{align*}
	&\left(a_1(n,k_1,k_2,t)-|\mf_1||\mf_2|\right){k_2-t+1\b1}^{-1}{n-t-1\b k_1-t-1}^{-1}{n-t\b k_2-t}^{-1}\\
	\ge&\ 1-\dfrac{q(q^{k_1-t-1}-1)(q^{k_2-t}-1)}{(q^{n-t-1}-1)(q^2-1)}-\dfrac{q^2(q^{k_2-t}-1)}{q^{n-t}-1}{t+1\b1}^2{k_1-t+1\b1}\\
	>&\ 1-q^{k_1-n}\cdot\dfrac{q^{k_2-t}-1}{q-1}-q^{k_1+k_2+t+5-n}\\
	\ge&\ 1-q^{-2t-7}-q^{-2}\\
	>&\ 0,
\end{align*}
as desired.
\end{proof}

\begin{lem}\label{af-upper-tt+1}
	Let $n$, $k_1$, $k_2$ and $t$ be positive integers with $k_1,k_2\ge t+1$ and $n\ge k_1+k_2+t+7$. Suppose $\mf_1\subset\mo(k_1,n)$ and $\mf_2\subset\mo(k_2,n)$ are maximal non-trivial cross $t$-intersecting families with $(\tau_t(\mf_1),\tau_t(\mf_2))=(t,t+1)$. Then one of the following holds.
	\begin{itemize}
		\item[\rmo] $\mf_1=\mma_1(M,T;k_1,t)$ and $\mf_2=\mma_2(M,T;k_2,t)$ for some $M\in\mo(k_2+1,n)$ and $T\in\mo(t,M)$.
		\item[\rmt] $\mf_1=\mma_3(S;k_1)$ and $\mf_2=\mma_4(S;k_2,t)$ for some $S\in\mo(t+1,n)$.
		\item[\rmth] $|\mf_1||\mf_2|<a_1(n,k_1,k_2,t).$
	\end{itemize}
\end{lem}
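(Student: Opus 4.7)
The plan is to extract the pair $(T,S)$ of structural flats, reduce the cross $t$-intersection condition to an incidence condition with certain $(k_2+1)$-flats, and then perform a trichotomy that either identifies one of the two exact structures $(1)$ or $(2)$, or lands in a residual case where the strict upper bound $(3)$ must be established. Using $\tau_t(\mf_1)=t$, I fix the $t$-flat $T$ contained in every member of $\mf_1$; using $\tau_t(\mf_2)=t+1$, I fix a $(t+1)$-flat $S$ that $t$-covers $\mf_2$. Lemma \ref{af-cover1}, applied to the maximal cross $t$-intersecting pair, forces $\dim(T\cap S)\ge t$, so $T\subseteq S$. Non-triviality gives that $\mg_2:=\{F_2\in\mf_2: T\not\subseteq F_2\}$ is non-empty, and for each $F_2\in\mg_2$ the flat $F_2\cap S$ is a $t$-flat $T_{F_2}\ne T$ in $S$. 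Set $M_{F_2}:=T\vee F_2$, a $(k_2+1)$-flat containing $S$.

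The key reduction is the implication: for any $F_1\in\mf_1$ and $F_2\in\mg_2$, if $\dim(F_1\cap F_2)\ge t$ then $\dim(F_1\cap M_{F_2})\ge t+1$. Indeed, $T\subseteq F_1\cap M_{F_2}$ and $F_1\cap F_2\subseteq F_1\cap M_{F_2}$; since $T\not\subseteq F_2$, the join $T\vee(F_1\cap F_2)$ sits inside $F_1\cap M_{F_2}$ and has dimension at least $t+1$ by Lemma \ref{af-weishu}, treating separately the subcases $T\cap T_{F_2}\ne\emptyset$ and $T$ parallel to $T_{F_2}$ disjoint.

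With this reduction, I branch into three cases. In Case (A), all $F_2\in\mg_2$ lie in a common $(k_2+1)$-flat $M$ and satisfy $\dim(F_2\cap T)=t-1$. Then the reduction gives $\mf_1\subseteq\mma_1(M,T;k_1,t)$ and $\mf_2\subseteq\mma_2(M,T;k_2,t)$, and maximality (together with the fact that $(\mma_1(M,T;k_1,t),\mma_2(M,T;k_2,t))$ is itself cross $t$-intersecting) upgrades these to equalities, yielding $(1)$. In Case (B), $\mf_2=\mma_4(S;k_2,t)$; then any $F_1$ with $F_1\cap S=T$ fails to $t$-intersect some suitable $F_2\in\mma_4(S;k_2,t)$, so $S\subseteq F_1$ for every $F_1\in\mf_1$, giving $\mf_1\subseteq\mma_3(S;k_1)$, and maximality yields $(2)$. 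In Case (C), neither (A) nor (B) holds, and the task is to prove $|\mf_1||\mf_2|<a_1(n,k_1,k_2,t)$.

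The main obstacle is Case (C). My plan is to decompose $\mf_1=\{F_1:S\subseteq F_1\}\sqcup\{F_1:F_1\cap S=T\}$: the first piece has size at most $\binom{n-t-1}{k_1-t-1}$, while the second piece is controlled via the condition $\dim(F_1\cap M_{F_2})\ge t+1$ for each $F_2\in\mg_2$. Since (A) fails, either there exist two distinct flats $M_{F_2^{(1)}}\ne M_{F_2^{(2)}}$ arising from $\mg_2$, or some $F_2\in\mg_2$ has $F_2\cap T=\emptyset$; in either subcase, applying Lemma \ref{af-fangsuo} to the corresponding covers gives strict restrictions on the second piece. Since (B) fails, $|\mf_2|\le|\mma_4(S;k_2,t)|-1$. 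Combining these bounds with the explicit formulas \eqref{af-cf1-size}, \eqref{cf3cf4-size}, \eqref{af-weiba}, the monotonicity in Lemma \ref{af-dijian}, and the hypothesis $n\ge k_1+k_2+t+7$ to absorb lower-order terms (in the style of Lemma \ref{af-upper-other}), should yield the strict inequality. The delicate point is choosing the sharp quantitative comparison for each geometric subcase of (C), since the partial failure of (A) and (B) can manifest in several different configurations.
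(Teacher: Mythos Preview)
Your setup is sound: fixing $T$ via $\tau_t(\mf_1)=t$, picking a $(t+1)$-cover $S\supseteq T$ via Lemma~\ref{af-cover1}, and the key reduction $\dim(F_1\cap(T\vee F_2))\ge t+1$ for $F_2\in\mg_2$ all match the paper. Cases (A) and (B) are also fine in spirit, though your Case~(B) is stated backwards compared to the paper (the paper assumes $S\subseteq F_1$ for all $F_1$ and derives $\mf_2=\mma_4$ by maximality, not the other way around).

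The genuine gap is in Case~(C). Your only control on $|\mf_2|$ is $|\mf_2|\le|\mma_4(S;k_2,t)|-1$, and this is far too weak: by \eqref{cf3cf4-size}, $|\mma_4(S;k_2,t)|$ is of order $q{t+1\brack1}{n-t\brack k_2-t}$, whereas to beat $a_1(n,k_1,k_2,t)$ with your $|\mf_1|$ bound (which is at best of order ${k_2-t\brack1}{n-t-1\brack k_1-t-1}$, as in the two-$M$ situation) you need $|\mf_2|$ to be of order ${n-t\brack k_2-t}$. The discrepancy is a factor $\sim q^{t+1}$, which no lower-order absorption via $n\ge k_1+k_2+t+7$ can fix. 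The paper closes this gap by an entirely different mechanism: it sets $M:=\bigvee_{S'\in\ms}S'$ where $\ms$ is the full set of $(t+1)$-covers of $\mf_2$, proves that every $F_2\in\mg_2$ satisfies $\dim(F_2\cap M)=m-1$ where $m=\dim M$, and uses this to get $|\mf_2|\le{n-t\brack k_2-t}+q{m\brack1}{n-m+1\brack k_2-m+1}$. The case split is then on $m\in\{t+1,\dots,k_2+1\}$, not on your (A)/(B)/(C). In the $m=t+1$ branch the paper further exploits that $S$ is the \emph{unique} minimum cover, so any other $(t+1)$-flat $I$ with $T\subset I\subset H$ fails to cover $\mf_2$, which feeds Lemma~\ref{af-fangsuo} and sharpens the bound on $|\mf_1|$; and it bounds $|\mf_2|$ using an $F_{1,1}\in\mf_1$ with $F_{1,1}\cap S=T$ as a cover. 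None of this structure is available from the mere failure of your (A) and (B), so as written your Case~(C) cannot be completed.
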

\begin{proof}
	Denote the set of all $t$-covers of $\mf_2$ with dimension $t+1$ by $\ms$. Let $T\in\mm(t,n)$ be a $t$-cover of $\mf_1$,  $M=\bigvee\limits_{S\in\ms}T$ and $m=\dim M$. By Lemma \ref{af-cover1}, each member of $\ms$ contains $T$.
	
	Let $S\in\ms$ and $F\in\mf_2\bs(\mf_2)_T$. By $\dim(S\cap F)\ge t$ and $T\not\subset F$, we have $\dim(S\cap F)=t$. Note that $T\vee F\subset S\vee F$. Together with Lemma \ref{af-weishu}, we get
	$$k_2+1\le\dim(T\vee F)\le\dim(S\vee F)=k_2+1.$$
	Hence $T\vee F=S\vee F$, which implies that $T\vee F=M\vee F$ and $\dim(M\vee F)=k_2+1$. Observe that $F\cap M\neq\emptyset$. By Lemma \ref{af-weishu}, we have $\dim(F\cap M)=m-1$. From $S\subset M$ and $\dim(M\vee F)=k_2+1$, we obtain $t+1\le m\le k_2+1$.
	
	Since $\tau_t(\mf_2)=t+1$, there exists $F_{2,1}\in\mf_2$ such that $T\not\subset F_{2,1}$. Write $$H:=T\vee F_{2,1}.$$ 
	Observe that
	$$S=T\vee(S\cap F_{2,1})\subset H,\quad \dim H=k_2+1.$$
	 For each $F\in\mf_1$, since $T\subset F$, $T\not\subset F_{2,1}$ and $\dim(F\cap F_{2,1})\ge t$, we have $\dim(F\cap H)\ge t+1$. Therefore
	\begin{equation}\label{af-f1-str}
		\mf_1\subset\{F\in\mm(k_1,n): T\subset F, \dim(F\cap H)\ge t+1\}.
	\end{equation}

\medskip
\noindent{\bf Case~1. $m=k_2+1$.}
\medskip

In this case, we have $M=H$. Notice that
\begin{align*}
	\mf_2\subset\{F\in\mo(t,k_2): T\subset F\}\cup\{F\in\mo(k_2,M): T\not\subset F\}.
\end{align*}

If $T\cap G\neq\emptyset$ for each $G\in\mf_2\bs(\mf_2)_T$, by Lemma \ref{af-weishu}, we have $\dim(G\cap T)=t-1$. Thus $\mf_2\subset\mma_2(M,T;k_2)$. Then (1) follows from \eqref{af-f1-str} and the maximality of $\mf_1$ and $\mf_2$.

Now assume that there exists $A=A'+a\in\mf_2$ such that $A\cap T=\emptyset$. By Lemma \ref{af-weishu} and $A\subset M$, we have $T'\subset A'$. Let $b\in T$ and $B:=B'+b$ be a $(t+1)$-subflat of $M$ with $B'\in{A'\b t+1}$ and $T\subset B$. Since $A\cap T=\emptyset$, we have $b-a\not\in A'$, which implies that $B\cap A=\emptyset$.
Write
\begin{align*}
	&\mr_1=\{F\in\mo(k_1,n): T\subset F,\dim(F\cap A)\ge t\},\\
	&\mr_2=\{F\in\mo(k_1,n): T\subset F, F\cap M=B\}.
\end{align*}
Observe that $\mf_1\subset\mr_1$, $\mr_1\cap\mr_2=\emptyset$  and $\mr_1\cup\mr_2\subset\mma_1(M,T;k_1,t)$.
By Lemmas \ref{gaosifangsuo}--\ref{af-weishu}, we have
\begin{align*}
	|\mr_2|&=\left|\left\{K\in{\ff^n\b k_1}: T'\subset K, K\cap M'=B'\right\}\right|\\
	&=\ q^{(k_2-t)(k_1-t-1)}{n-k_2-1\b k_1-t-1}\\
	&\ge\ q^{(k_1-t-1)(n-k_1)}
\end{align*}
and 
$$|\mf_2|\le{n-t\b k_2-t}+q{k_2+1\b1}.$$
Together with Lemma \ref{gaosifangsuo} and $n\ge k_1+k_2+4\ge\frac{k_1+2}{k_2-t}+k_2+2$, we get
\begin{align*}
	&\ a_1(n,k_1,k_2,t)-|\mf_1||\mf_2|\\
	>&\ |\mma_1(M,T;k_1,t)|{n-t\b k_2-t}-(|\mma_1(M,T;k_1,t)|-|\mr_2|)\left({n-t\b k_2-t}+q{k_2+1\b1}\right)\\
	=&\ |\mr_2|\left({n-t\b k_2-t}+q{k_2+1\b1}\right)-q{k_2+1\b1}|\mma_1(M,T;k_1,t)|\\
	\ge&\ q^{(k_1-t-1)(n-k_1)}{n-t\b k_2-t}-q{k_2+1\b1}{k_2-t+1\b1}{n-t-1\b k_1-t-1}\\
	\ge&\ q^{(k_1-t-1)(n-k_1)}\left(q^{(k_2-t)(n-k_2)}-q^{k_1+2k_2-2t+2}\right)\\
	\ge&\ 0.
\end{align*}
Then (3) holds.

	\medskip
	\noindent{\bf Case 2.  $t+2\le m\le k_2$.}
	\medskip

	Suppose that $T\vee G=H$ for each $G\in\mf_2\bs(\mf_2)_T$. Notice that 
	\begin{align*}
		\mf_2\subset\{F\in\mo(t,k_2): T\subset F\}\cup\{F\in\mo(k_2,H): T\not\subset F\}.
	\end{align*}
 and $\dim H=k_2+1$. Then $\mf_1$ and $\mf_2$ have been studied in Case 1.  Therefore, we suppose that there exists $F_{2,2}\in\mf_2\bs(\mf_2)_T$ such that $H\neq T\vee F_{2,2}$.  Observe that $\dim(T\vee F_{2,2})=k_2+1$ and $\dim(F\cap(T\vee F_{2,2}))\ge t+1$ for each $F\in\mf_1$.

	Write $W=(T\vee F_{2,2})\cap H$, $\dim W=w$ and
	$$\mmu=\left\{(U_1,U_2)\in\mo(t+1,T\vee F_{2,2})\times\mo(t+1,H):  T\subset U_i\not\subset W, i=1,2\right\}.$$
	 We have
	\begin{equation}\label{af-uu1u2}
		\mf_1\subset\left(\bigcup_{U\in\mo(t+1, W),\ T\subset U}(\mf_1)_U\right)\cup\left(\bigcup_{(U_1,U_2)\in\mmu}(\mf_1)_{U_1\vee U_2}\right).
	\end{equation}
	Let $(U_1,U_2)\in\mmu$. Since $U_1\subset T\vee F_{2,2}$ and $U_1\not\subset W$, we have $U_1\not\subset H$. Together with $T\subset U_1\cap U_2$, we get $\dim(U_1\vee U_2)=t+2$. Hence
	$$\left|\bigcup_{(U_1,U_2)\in\mmu}(\mf_1)_{U_1\vee U_2}\right|\le\left({k_2-t+1\b1}-{w-t\b1}\right)^2{n-t-2\b k_1-t-2}.$$
	Then by $|\{U\in\mo(t+1, W):  T\subset U\}|={w-t\b1}$ and \eqref{af-uu1u2}, we have
	\begin{align*}
		|\mf_1|&\le{w-t\b1}{n-t-1\b k_1-t-1}+\left({w-t\b1}-{k_2-t+1\b1}\right)^2{n-t-2\b k_1-t-2}\\
		&=x^2{n-t-2\b k_1-t-2}+x{n-t-1\b k_1-t-1}+{k_2-t+1\b1}{n-t-1\b k_1-t-1},
	\end{align*}
	where $x={w-t\b1}-{k_2-t+1\b1}$. By $n\ge k_1+k_2-t+2$, we have
	$${w-t\b1}-{k_2-t+1\b1}\ge-q^{k_2-t+1}\ge-q^{n-k_1-1}\ge-\dfrac{{n-t-1\b k_1-t-1}}{2{n-t-2\b k_1-t-2}}.$$
	Considering $w\le k_2$ and the property of quadratic functions, we have
	$$|\mf_1|\le{k_2-t\b1}{n-t-1\b k_1-t-1}+q^{2k_2-2t}{n-t-2\b k_1-t-2}.$$
	Together with Lemma \ref{gaosifangsuo}, we obtain
	\begin{equation}\label{af-upper-2-f1-size}
		\begin{aligned}
			|\mf_1|\le\left({k_2-t\b1}+q^{k_1+2k_2-2t-n}\right){n-t-1\b k_1-t-1}.
		\end{aligned}
	\end{equation}
	Note that
	$$\mf_2\subset\{F\in\mo(k_2,n):  T\subset F\}\cup\{F\in\mo(k_2,n):  T\not\subset F, \dim(F\cap M)=m-1\}.$$
	By Lemmas \ref{gaosifangsuo} and \ref{af-weishu}, we have
	\begin{align*}
		|\mf_2|&\le{n-t\b k_2-t}+q{m\b1}{n-m+1\b k_2-m+1}<{n-t\b k_2-t}+q^{m+1}{n-m+1\b k_2-m+1}.
	\end{align*}
	By $n\ge k_1+k_2\ge k_2+1$, we have
	$$\dfrac{q^{m+1}{n-m+1\b k_2-m+1}}{q^{m+2}{n-m\b k_2-m}}=\dfrac{(q^{n-m+1}-1)}{q(q^{k_2-m+1}-1)}\ge q^{n-k_2-1}\ge1.$$
	Then
	$$|\mf_2|<{n-t\b k_2-t}+q^{t+3}{n-t-1\b k_2-t-1}$$
	Together with \eqref{af-upper-2-f1-size}, Lemmas \ref{gaosifangsuo}, \ref{af-suanliangci-xiajie} and $n\ge k_1+k_2+t+7$, we get
	\begin{align*}
		&\ (a_1(n,k_1,k_2,t)-|\mf_1||\mf_2|){n-t-1\b k_1-t-1}^{-1}{n-t\b k_2-t}^{-1}\\
		>&\left({k_2-t+1\b1}-\dfrac{q(q^{k_1-t-1}-1)}{q^{n-t-1}-1}{k_2-t+1\b2}\right)\\
		&\ -\left({k_2-t\b1}+q^{k_1+2k_2-2t-n}\right)\left(1+\dfrac{q^{t+3}(q^{k_2-t}-1)}{q^{n-t}-1}\right)\\
		\ge&\ q^{k_2-t}-q^{k_1+2k_2-2t+1-n}-q^{k_1+2k_2-2t-n}-q^{2k_2+3-n}-q^{k_1+3k_2-t+3-2n}\\
		=&\ q^{k_2-t}(1-q^{k_1+k_2-t+1-n}-q^{k_1+k_2-t-n}-q^{k_2+t+3-n}-q^{k_1+2k_2+3-2n})\\
		\ge&\ q^{k_2-t}(1-q^{-2t-6}-q^{-2t-7}-q^{-k_1-4}-q^{-k_1-2t-11})\\
		>&\ 0.
	\end{align*}
Then (3) follows.

	\medskip
	\noindent{\bf Case 3. $m=t+1$.}
	\medskip
	
	In this case, let $S$ be the unique member of $\ms$.
	
	If $S$ is contained in each member of $\mf_1$, we have
	$$\mf_1\subset\mma_3(S;k_1),\quad\mf_2\subset\mma_4(S;k_2,t).$$
	Together with the maximality of $\mf_1$ and $\mf_2$, (2) follows.
	
	Now suppose that there exists $F_{1,1}\in\mf_1$ with $F_{1,1}\cap S=T$.  
	Let $I$ be a $(t+1)$-subflat of $H$ with $T\subset I\not\subset S$. Note that $I$ is not a $t$-cover of $\mf_2$. Then there exists $F_{2,3}\in\mf_2$ such that $F_{2,3}$ does not $t$-intersect $I$. Since $F_{2,3}$ is a $t$-cover of $\mf_1$, by Lemma \ref{af-fangsuo} and $n\ge k_1+k_2-t+1$, we have
	$$|(\mf_1)_I|\le{k_2-t+1\b1}{n-t-2\b k_1-t-2}.$$
	Note that $S\subset H$. Then by \eqref{af-f1-str}, we obtain
	\begin{align*}
		|(\mf_1)_T\bs(\mf_1)_S|&\le\sum_{I\in\mo(t+1,H),\ T\subset I\not\subset S}|(\mf_1)_I|\\
		&\le\left({k_2-t+1\b1}-1\right){k_2-t+1\b1}{n-t-2\b k_1-t-2}\\
		&=\ q{k_2-t\b1}{k_2-t+1\b1}{n-t-2\b k_1-t-2}.
	\end{align*}
	Together with Lemma \ref{gaosifangsuo}, we get
	\begin{equation}\label{af-upper-1-f1}
		\begin{aligned}
			|\mf_1|&=|(\mf_1)_S|+|(\mf_1)_T\bs(\mf_1)_S|\\
			&\le{n-t-1\b k_1-t-1}+q{k_2-t\b1}{k_2-t+1\b1}{n-t-2\b k_1-t-2}\\
			&< \ (1+q^{k_1+2k_2-2t+2-n}){n-t-1\b k_1-t-1}.
		\end{aligned}
	\end{equation}
	Let $T_1\in\mo(t,S)\bs\{T\}$. Since $T_1\not\subset F_{1,1}$ and $F_{1,1}$ is a $t$-cover of $\mf_2$, by Lemma \ref{af-fangsuo}, we have
	$$|(\mf_2)_{T_1}\bs(\mf_2)_S|\le\left({k_1-t+1\b1}-1\right){n-t-1\b k_2-t-1}=q{k_1-t\b1}{n-t-1\b k_2-t-1}.$$
	Then by Lemma \ref{gaosifangsuo}, we get
	\begin{align*}
		|\mf_2|&=|(\mf_2)_T|+\sum_{T_1\in\mo(t,S)\bs\{T\}}|(\mf_2)_{T_1}\bs(\mf_2)_S|\\
		&\le{n-t\b k_2-t}+q^2{t+1\b1}{k_1-t\b1}{n-t-1\b k_2-t-1}\\
		&<\ (1+q^{k_1+k_2+3-n}){n-t\b k_2-t}.
	\end{align*}
	Together with \eqref{af-upper-1-f1}, Lemmas \ref{gaosifangsuo},  \ref{af-suanliangci-xiajie} and $n\ge k_1+k_2+t+7$, we get
	\begin{align*}
		&\ (a_1(n,k_1,k_2,t)-|\mf_1||\mf_2|){n-t-1\b k_1-t-1}^{-1}{n-t\b k_2-t}^{-1}\\
		>&\left({k_2-t+1\b1}-\dfrac{q(q^{k_1-t-1}-1)}{q^{n-t-1}-1}{k_2-t+1\b2}\right)-(1+q^{k_1+2k_2-2t+2-n})(1+q^{k_1+k_2+3-n})\\
		=&\ q{k_2-t\b1}-\dfrac{q(q^{k_1-t-1}-1)}{q^{n-t-1}-1}{k_2-t+1\b2}-q^{2k_1+3k_2-2t+5-2n}-q^{k_1+2k_2-2t+2-n}-q^{k_1+k_2+3-n}\\
		>&\ q^{k_2-t}(1-q^{k_1+k_2-t+1-n}-q^{2k_1+2k_2-t+5-2n}-q^{k_1+k_2-t+2-n}-q^{k_1+t+3-n})\\
		\ge&\ q^{k_2-t}(1-q^{-2t-6}-q^{-3t-9}-q^{-2t-5}-q^{-k_2-4})\\
		>&\ 0.
	\end{align*}
	Then (3) follows.
\end{proof}

\begin{proof}[\bf Proof of Theorem \ref{non-trivial}]
	
		Let $n$, $k_1$, $k_2$ and $t$ be positive integers with $n\ge k_1+k_2+t+7$ and $k_1\ge k_2\ge t+1$.	Suppose that $\mf_1\subset\mm(k_1,n)$ and $\mf_2\subset\mm(k_2,n)$ are non-trivial cross $t$-intersecting families with maximum product of sizes. 
		
		By the definitions of $a_1(n,k_1,k_2,t)$ and $a_2(n,k_1,k_2,t)$, we have
		$$|\mf_1||\mf_2|\ge\max\{a_1(n,k_1,k_2,t),a_1(n,k_2,k_1,t),a_2(n,k_1,k_2,t),a_2(n,k_2,k_1,t)\}.$$
		Then by Lemma \ref{af-upper-other}, we have $(\tau_t(\mf_1),\tau_t(\mf_2))=(t,t+1)$ or $(\tau_t(\mf_1),\tau_t(\mf_2))=(t+1,t)$. Together with Lemma \ref{af-cf1-cf2}, \ref{af-cf-cf'} and \ref{af-upper-tt+1}, we finish the proof of Theorem \ref{non-trivial}.	
\end{proof}

\end{document}